\documentclass[onefignum,onetabnum]{siamart251216}

\usepackage{amsopn}

\usepackage{soul}
\usepackage[normalem]{ulem}
\usepackage{amsfonts}
\usepackage{graphicx}
\usepackage{epstopdf}
\usepackage{algorithmic}
\ifpdf
  \DeclareGraphicsExtensions{.eps,.pdf,.png,.jpg}
\else
  \DeclareGraphicsExtensions{.eps}
\fi

\newsiamremark{remark}{Remark}

\headers{Reformulation of Fluid Problems on Closed Hypersurfaces}{R. H. Nochetto and M. Shakipov}

\title{Fluid Problems Posed on Closed Hypersurfaces: Circumventing the Saddle-Point Structure\thanks{Submitted to the editors DATE. \funding{Both authors were partially supported by the NSF grant DMS-2512392.}}}

\author{Ricardo H. Nochetto\thanks{Department of Mathematics and Institute for Physical Science and Technology, University of Maryland, College Park, MD 20742 USA (\email{rhn@umd.edu}).}
\and Mansur Shakipov\thanks{Department of Mathematics, University of Houston, Houston, TX 77204 USA
  (\email{mshakipov@uh.edu}).}}

\makeatletter
\newcommand*{\addFileDependency}[1]{
  \typeout{(#1)}
  \@addtofilelist{#1}
  \IfFileExists{#1}{}{\typeout{No file #1.}}
}
\makeatother

\usepackage{hyperref}
\usepackage{mathtools, amsmath, amssymb, bm, centernot, cancel, scalerel, dsfont}
\usepackage{mathrsfs}
\usepackage{enumitem}
\setlist[itemize]{leftmargin=*}
\DeclareMathAlphabet\mathbfcal{OMS}{cmsy}{b}{n}
\DeclareMathAlphabet\mathscrbf{OMS}{mdugm}{b}{n}

    \usepackage{accents}

    \newcommand{\pa}{\partial}

    \renewcommand{\bf}{\mathbf{f}}
    \newcommand{\bg}{\mathbf{g}}

    \newcommand{\bn}{\mathbf{n}}

    \newcommand{\bu}{\mathbf{u}}
    \newcommand{\bv}{\mathbf{v}}
    \newcommand{\bw}{\mathbf{w}}

    \newcommand{\bA}{\mathbf{A}}
    
    \newcommand{\bC}{\mathbf{C}}
    
    \newcommand{\bH}{\mathbf{H}}
    \newcommand{\bI}{\mathbf{I}}
    \newcommand{\bL}{\mathbf{L}}
    
    \newcommand{\bP}{\mathbf{P}}
    
    \newcommand{\bR}{\mathbf{R}}
    \newcommand{\bW}{\mathbf{W}}

    \newcommand{\bzero}{\mathbf{0}}

    \newcommand{\sV}{\mathscr{V}}
    
    \newcommand{\sX}{\mathscr{X}}
    \newcommand{\sY}{\mathscr{Y}}
    \newcommand{\sZ}{\mathscr{Z}}

    \newcommand{\cA}{\mathcal{A}}
    \newcommand{\cB}{\mathcal{B}}
    
    \newcommand{\cD}{\mathcal{D}}

    \newcommand{\cK}{\mathcal{K}}

    \newcommand{\cP}{\mathcal{P}}

    \newcommand{\cV}{\mathcal{V}}
    \newcommand{\cW}{\mathcal{W}}

    \newcommand{\bcV}{\boldsymbol{\mathcal{V}}}
    \newcommand{\bcW}{\boldsymbol{\mathcal{W}}}

    \newcommand{\bbR}{\mathbb{R}}
    \newcommand{\bbZ}{\mathbb{Z}}

    \newcommand{\dH}{\mathds{H}}
    \newcommand{\dL}{\mathds{L}}

    \renewcommand{\d}{\mathrm{d}}
    \newcommand{\dt}{\mathrm{d}t}
    \newcommand{\ds}{\mathrm{d}s}

    \newcommand{\embeds}{\hookrightarrow}

    \newcommand{\G}{\Gamma}

    \newcommand{\nablaG}{\nabla_\Gamma}
    \newcommand{\DG}{D_\Gamma}
    \newcommand{\divG}{\mathrm{div}_\Gamma}
    \newcommand{\DeltaG}{\Delta_\Gamma}
    \newcommand{\DeltaS}{\Delta_S}

    \DeclareMathOperator{\tr}{tr}

\ifpdf
\hypersetup{
  pdftitle={Fluid Problems Posed on Closed Hypersurfaces: Circumventing the Saddle-Point Structure},
  pdfauthor={R. H. Nochetto, M. Shakipov}
}
\fi

\begin{document}

\maketitle

\begin{abstract}
    For a $d$-dimensional closed hypersurface in $\mathbb{R}^{d+1}$, we reformulate the (stationary) tangent Stokes, transient tangent Stokes, and transient tangent Navier--Stokes equations by replacing the divergence equation with a pressure Poisson equation, thereby circumventing their saddle-point structure. The pressure Poisson equation hinges on a commutator identity between the divergence and the surface diffusion operator, whose lower-order term depends on the Ricci curvature, and requires no boundary conditions for the pressure. We show that the reformulated operator is the composition of a linear, block-triangular data map with the primal operator. Consequently, each reformulation is well-posed if and only if the primal problem is, and solving it with arbitrary data amounts to solving the primal problem with a modified divergence datum. For hypersurfaces of class $C^2$ and data of the pressure Poisson equation in $H^{-1}$, the tangent Stokes reformulations can be posed in $H^1$-based spaces, which paves the way for conforming discretizations that do not require a discrete inf-sup condition.
\end{abstract}

\begin{keywords}
    tangent Stokes, transient tangent Stokes, surface Navier--Stokes, pressure Poisson equation, reformulation
\end{keywords}

\begin{AMS}
    35Q35, 35R01, 76D07, 35Q30, 76D05.
\end{AMS}

\section{Introduction}
    Recent years have seen an increasing interest in the modeling of fluid flow confined to thin curved domains, with applications ranging from biological membranes \cite{ArroyoDeSimone2009, EdwardsBrennerWasan1991} to active nematics \cite{doostmohammadi_active_2018, keber2014topology}. The modeling of these phenomena reduces to the (Navier--)Stokes equations posed on a stationary or moving surface embedded in $\bbR^3$ \cite{Scriven1960, ArroyoDeSimone2009, KobaLiuGiga2017, JankuhnOlshanskiiReusken2018}. As in the Euclidean case, the formulation of these equations in the primitive variables, velocity and pressure, has a saddle-point structure. Stable discretizations thus generally require a \textit{discrete inf-sup condition} \cite{Brezzi1974, BoffiBrezziFortin2013}, which restricts the choice of finite element spaces.
 
    In this paper, we show that this saddle-point structure can be circumvented when the fluid is confined to a closed hypersurface. The key point is that a closed hypersurface has no boundary, so the pressure can be determined by a pressure Poisson equation that needs no boundary condition. Throughout, $\G$ is a stationary, connected, and closed (i.e., compact and without boundary) hypersurface of dimension $d \geq 2$ in $\bbR^{d+1}$. We study the (stationary) tangent Stokes (TS), transient tangent Stokes (TTS), and transient tangent Navier--Stokes (TTNS) equations posed on $\G$. In unified form, these equations read as follows: given data $(\bf_0, \bf, f)$, find a velocity-pressure pair $(\bu, u)$ such that
    \begin{equation} \label{eq:tts-ttns}
        \begin{cases}
            \alpha_1 \pa_t \bu + \alpha_2 \bu \cdot \nablaG \bu - 2 \mu \DeltaS \bu + \nablaG u \kern-0.72em & = \bf, \\
            \divG \bu \kern-0.72em & = f, \\
            \alpha_1 \bu(0) \kern-0.72em & = \alpha_1 \bf_0.
        \end{cases}
    \end{equation}
    Here, the constant $\mu > 0$ is the surface viscosity, and $\alpha_1, \alpha_2 \in \{0, 1\}$ are indicators of transience and the convective term $\bu \cdot \nablaG \bu := (\nablaG \bu) \bu$, respectively. We denote the tangential gradient by $\nablaG$, the tangential divergence by $\divG$, and the Laplace--Beltrami operator by $\DeltaG$. The surface viscous operator $\DeltaS$ applies the tangential divergence to the symmetric part of the covariant gradient and projects the result onto the tangent space. This operator is the physically relevant choice for surface fluids \cite{KobaLiuGiga2017, JankuhnOlshanskiiReusken2018}. Section~\ref{ch:prelims;se:gen-notation;subse:geometry} gives the precise definitions. We use boldface for vector-valued quantities and normal font for their scalar counterparts; in particular, the pressure is denoted by $u$. We also stress that the divergence datum $f$ need not vanish, a fact that will be instrumental in the analysis.
 
    This work is motivated by a numerical method for TS introduced in \cite{NochettoShakipov2025}, which rests on an equation for the pressure. We formally derive the analogous equation for all three problems by applying $-\divG$ to the momentum equation in \eqref{eq:tts-ttns} and substituting $\divG \bu = f$. The resulting equation is relatively simple for two reasons. First, the tangential divergence commutes with the time derivative because $\G$ is stationary. Second, the tangential divergence commutes with $\DeltaS$ up to the lower-order curvature term $\divG (\bR \bu)$, where $\bR$ is the \textit{Ricci curvature map} of $\G$; see Lemma~\ref{lem:stokes-iden} (decomposition of $\DeltaS$). For $d = 2$, i.e., for surfaces in $\bbR^3$, the Ricci curvature map acts on tangential fields as multiplication by the Gauss curvature. The outcome is the \textit{pressure Poisson equation}
    \begin{equation} \label{eq:intro-pressure-poi}
        - \DeltaG u - \alpha_2 \divG (\bu \cdot \nablaG \bu) + 2 \mu \, \divG (\bR \bu) = - \divG \bf + \alpha_1 \pa_t f - 2 \mu \DeltaG f,
    \end{equation}
    which requires no boundary condition for the pressure because $\G$ is closed. Section~\ref{subsec:reform-of-stokes-type-pbs} contains the derivation.
 
    We call the system \eqref{eq:tts-ttns} the \textit{primal problem}. Replacing its divergence equation with the pressure Poisson equation \eqref{eq:intro-pressure-poi} yields what we call the \textit{reformulation}. In the reformulation, the pressure solves an explicit second-order equation, namely a perturbed Laplace--Beltrami problem, rather than acting as a Lagrange multiplier for the divergence equation. The reformulation thus \textit{circumvents the saddle-point structure}: its conforming discretizations do not require a discrete inf-sup condition. In \cite{NochettoShakipov2025}, this reformulation was derived and discretized for TS with a modified viscous operator.
 
    Every solution of the primal problem also solves the reformulation, with the right-hand side of the pressure Poisson equation as datum. A more delicate question is whether the reformulation remains solvable and well-behaved for arbitrary data. Such data arise naturally, for instance through perturbations or discretization, and the well-posedness of the reformulation with arbitrary data is crucial for its numerical analysis \cite{NochettoShakipov2025}. We therefore study the reformulation with \textit{general} data $(\bg_0, \bg, g)$:
    \begin{equation} \label{eq:tts-ttns-reform-gen-data}
        \begin{cases}
            \alpha_1 \pa_t \bu + \alpha_2 \bu \cdot \nablaG \bu - 2 \mu \DeltaS \bu + \nablaG u \kern-0.72em & = \bg, \\
            - \DeltaG u - \alpha_2 \divG (\bu \cdot \nablaG \bu) + 2 \mu \, \divG (\bR \bu) \kern-0.72em & = g, \\
            \alpha_1 \bu(0) \kern-0.72em & = \alpha_1 \bg_0.
        \end{cases}
    \end{equation}
    \textit{Our main result is that the reformulation \eqref{eq:tts-ttns-reform-gen-data} is well-posed if and only if the primal problem \eqref{eq:tts-ttns} is. Moreover, solving the reformulation with arbitrary data amounts to solving the primal problem with a modified divergence datum.}
 
    The proof rests on a simple algebraic structure. The pressure Poisson equation is a linear combination of the equations of the primal problem. Indeed, one applies $-\divG$ to both sides of the momentum equation, applies $\alpha_1 \pa_t - 2 \mu \DeltaG$ to both sides of the divergence equation, and adds the results. The reformulation thus arises from the primal problem by a row operation, much like a step of Gaussian elimination. We write the primal problem as $\cA x = y$ and the reformulation as $\cB x = z$. A linear \textit{data map} $\cD$ records the row operation. The data map leaves the initial and momentum data unchanged and replaces the divergence datum $f$ by the right-hand side of the pressure Poisson equation. Applied to the left-hand side $\cA x$, the data map yields $\cB x$; that is, the operators factor as $\cB = \cD \circ \cA$. This factorization rests on a cancellation: the term $(\alpha_1 \pa_t - 2 \mu \DeltaG) \divG \bu$ arises in both summands of the row operation with opposite signs. Recall that the tangential divergence commutes with $\pa_t$ and, up to the curvature term, with $\DeltaS$. After the cancellation, the velocity enters the pressure Poisson equation only through the curvature term and the convective term. Moreover, the data map is block triangular. Inverting the data map therefore amounts to solving a single Poisson problem ($\alpha_1 = 0$) or heat problem ($\alpha_1 = 1$) for the divergence datum. Consequently, the operator $\cB$ is a homeomorphism if and only if $\cA$ is, and the reformulation loses no information; see Proposition~\ref{prop:transf-of-well-pos} (transfer of well-posedness) and Lemma~\ref{lem:data-maps} (data maps). Since the data map does not involve the convective term, the nonlinear TTNS problem requires no additional work at this level.
 
    We now list our main results and briefly comment on them.
    \begin{itemize}
        \item \textit{Tangent Stokes}. For $\G$ of class $C^2$, the TS reformulation is well-posed in the energy setting; see Corollary~\ref{cor:well-posed} (well-posed reformulations). This result is unconditional because the well-posedness of the primal TS problem is known \cite[Theorem~4.3]{JankuhnOlshanskiiReusken2018}. Compared with \cite{NochettoShakipov2025}, we require $\G$ to be of class $C^2$ instead of $C^3$, and the viscous term is governed by $\DeltaS$ itself. The kernel of $\DeltaS$ consists of the Killing fields of $\G$, i.e., the intrinsic rigid rotations of $\G$. We handle these fields by working in their $L^2$-orthogonal complement.
 
        \item \textit{Transient problems}. For $\G$ of class $C^2$, the TTS reformulation is well-posed with stability constants independent of the final time $T$. For $d = 2$ and $\G$ of class $C^3$, the same framework covers the nonlinear TTNS problem in a strong setting; see Theorem~\ref{thm:reformulations} (equivalence of reformulations) and Corollary~\ref{cor:well-posed} (well-posed reformulations). The primal problem constrains its data by the compatibility condition $\divG \bf_0 = f(0)$. The reformulation needs no such condition; see Remark~\ref{rmk:original-ic} (original initial condition). For both transient problems, we rely on the well-posedness of the primal problem, which is established in the companion paper \cite{NochettoShakipov2026-reform-part-ii}.
 
        \item \textit{Equal-order formulations}. If the datum of the pressure Poisson equation lies in $H^{-1}$, the dual of $H^1$, then the pressure lies in $H^1$. The pressure Poisson equation may then be tested with $H^1$-functions, and the reformulated operator becomes a linear homeomorphism between $H^1$-based spaces; see Remark~\ref{rmk:equal-order} (equal-order formulations). Consequently, $H^1$-conforming finite elements suffice for both the velocity and the pressure, and conforming discretizations do not require a discrete inf-sup condition \cite{NochettoShakipov2025}.
 
        \item \textit{Interpretation}. The modified divergence datum in our main result solves a single linear Poisson or heat problem driven by the data; see Remark~\ref{rmk:factorization} (interpretation). Hence, perturbing the data of the pressure Poisson equation is equivalent to perturbing the divergence equation alone. At the discrete level, we therefore expect a consistency error in the pressure Poisson equation to appear as a defect in mass conservation. The size of this defect is controlled by the stability of the auxiliary Poisson or heat problem.
    \end{itemize}
 
    The abstract argument is simple, and so is the algebra behind the factorization $\cB = \cD \circ \cA$. The main subtlety lies in the choice of the functional setting. The spaces must be such that the primal problem can be expected to be well-posed, the data map is invertible, and the formal derivation of the pressure Poisson equation is rigorous. The right-hand side of the pressure Poisson equation \eqref{eq:intro-pressure-poi} foreshadows the difficulty: the operator $\alpha_1 \pa_t - 2 \mu \DeltaG$ acts on the divergence datum $f$. Hence, the divergence datum must lie in the solution space of a Poisson or heat problem; in the transient case, the divergence datum must possess a time derivative. For TTS in the energy setting, for instance, the time derivative $\pa_t f$ must lie in $L^2(0, T; H^{-2})$. Table~\ref{tab:settings} lists the functional settings of all three problems.
 
    We stress that the equivalence between $\cA x = y$ and $\cB x = \cD y$ requires nothing of $\cA$. The reformulation therefore inherits whatever existence, uniqueness, and stability properties the primal problem enjoys. This inheritance holds even when the primal problem is not well-posed on the whole data space, as we expect for the stationary tangent Navier--Stokes equations; see Section~\ref{sec:conclusion}. Conversely, since the equivalence is two-sided, a direct proof of the well-posedness of a reformulation would also yield that of the corresponding primal problem.
 
    Imposing orthogonality to the Killing fields is delicate at the discrete level \cite{BonitoDemlowLicht2020, OlshanskiiShakipov2026}. For the transient problems, this difficulty can be avoided because the time derivative controls the Killing component of the velocity. Hence, TTS and TTNS may be posed on all tangential fields, at the expense of $T$-dependent constants for TTS. Remark~\ref{rmk:killing} (Killing fields) explains why TTS may also be posed on the orthogonal complement without loss of generality, whereas TTNS may not.
 
    We now place our work in the literature. Replacing the incompressibility constraint with a pressure Poisson equation is a classical idea. On the torus or on the whole Euclidean space, the pressure Poisson equation recovers the pressure once the velocity is known \cite[\S 5]{robinson_three-dimensional_2016}. In bounded domains, however, the pressure Poisson equation must be supplemented with boundary conditions for the pressure, and the equivalence with the original problem hinges on their choice \cite[\S 5.4]{robinson_three-dimensional_2016}, \cite{GreshoSani1987}. The closest precedent to our work is the unconstrained formulation of the Navier--Stokes equations in bounded domains with no-slip boundary conditions \cite{GrubbSolonnikov1991, JohnstonLiu2004, LiuLiuPego2007}. There, the Neumann condition for the pressure is chosen so that the divergence of the velocity solves a heat problem with a homogeneous Neumann condition. Incompressibility is thus propagated from the initial data rather than enforced as a constraint. This mechanism leads to efficient schemes that treat the pressure explicitly and decouple the computation of the velocity and the pressure \cite{JohnstonLiu2004, ShirokoffRosales2011}.
 
    Our reformulations share the mechanism of the unconstrained formulation in the transient case: the divergence of the velocity solves a heat problem driven by the data. Otherwise, the two approaches have different aims. In \cite{LiuLiuPego2007}, the pressure is expressed in terms of the velocity, which yields an evolution equation for the velocity alone. Our reformulations keep the pressure as an unknown and replace the divergence equation with an elliptic equation for the pressure. The equivalence with the primal problem then rests on the factorization $\cB = \cD \circ \cA$ rather than on time evolution. Hence, the equivalence also covers the stationary problem and arbitrary data of the pressure Poisson equation.
 
    The pressure Poisson equation also underlies stabilized finite element methods for the Stokes problem \cite{BrezziPitkaranta1984, HughesFranca1986}. These methods add to the divergence equation a mesh-dependent multiple of the pressure Laplacian or of the momentum residual tested with pressure gradients. Unlike our reformulations, these methods perturb the saddle-point structure rather than remove it. For the Stokes problem with periodic boundary conditions, a nonstandard nonconforming approximation was proposed in \cite{Guermond2005}; the method of \cite{NochettoShakipov2025} can be viewed as a generalization thereof. On closed hypersurfaces, the pressure Poisson equation was used to prove higher regularity for two-dimensional TS \cite{OlshanskiiReuskenZhiliakov2021}. The same equation yields higher $L^p$-based regularity for TS and the stationary tangent Navier--Stokes equations in higher dimensions \cite{BenavidesNochettoShakipov2025}. To the best of our knowledge, however, the reformulations, viewed as problems in their own right, appear only in \cite{NochettoShakipov2025}, and only for TS.
 
    The rest of the paper is organized as follows. Section~\ref{sec:notation-and-function-spaces} introduces the geometric notation and the function spaces, and discusses the Killing fields of $\G$. Section~\ref{sec:preliminary-results} collects preliminary results: equivalent norms, Lions--Magenes-type results, the well-posedness of the heat problem on $\G$ at two levels of regularity, and the decomposition of $\DeltaS$ behind the pressure Poisson equation. Section~\ref{sec:reformulations} derives the reformulations formally, develops the abstract framework, and applies the framework to TS, TTS, and TTNS. Section~\ref{sec:conclusion} closes the paper with conclusions.

\section{Notation and function spaces} \label{sec:notation-and-function-spaces}
    In this section, we introduce the notation used throughout the paper. We denote by $T > 0$ the final time and by $I := (0, T)$ the corresponding time interval.
    \subsection{Geometry and differential operators} \label{ch:prelims;se:gen-notation;subse:geometry}
        Let $\G$ be a connected and closed (i.e., compact and without boundary) $d$-dimensional hypersurface of class $C^2$ embedded in $\bbR^{d+1}$. This is the baseline regularity assumption on $\G$ in this work; whenever more regularity is needed, we shall state it explicitly. Associated with $\G$, we let $\bn$ be its outward unit normal $\bn \in \bC^1(\G)$ and $\bP := \bI - \bn \otimes \bn$.

        Let $v, \bv, \bA$ be scalar, vector, and tensor-valued functions of class $C^2$ defined on $\G$. Let $\nablaG v$ denote the tangential gradient. For vector fields $\bv = (v_i)_{i=1}^{d+1}$, the covariant gradient is given by $\nablaG \bv = \bP (\nablaG^T v_i)_{i=1}^{d+1}$, and its symmetric part is denoted by $\DG \bv = \frac12 (\nablaG \bv + \nablaG^T \bv)$. The tangential divergence is defined as $\divG \bv := \tr (\nablaG \bv)$. For tensor fields, $\divG$ is defined row-wise: $\divG \bA := (\divG \bA_{i,\cdot})_{i=1}^{d+1}$. The Laplace--Beltrami operator reads $\DeltaG v := \divG \nablaG v$, the surface diffusion operator is $\DeltaS \bv := \bP \divG \DG \bv$, while the Bochner Laplacian is $\Delta_B \bv := \bP \divG \nablaG \bv$.

        We next introduce further geometric quantities. Let $\bW := \nablaG \bn$ be the Weingarten map, which is symmetric. The following quantity, called the Ricci curvature map, will play a crucial role in the forthcoming development:
        \[
            \bR := \tr(\bW) \bW - \bW \bW.
        \]
        For $d = 2$, we have $\bR = K \bP$ \cite[Lemma A.3]{JankuhnOlshanskiiReusken2018}, where $K$ is the Gauss curvature.

    \subsection{Function spaces} \label{ch:prelims;se:general-notation;subse:function-spaces}
        Let $\G$ be of class $C^{\max\{m, 2\}}, m \in \bbZ_{\geq 1}$. By $H^m(\G)$, $m \in \bbZ_{\geq 1}$, we denote the Sobolev spaces on $\G$, and by $\bH^m(\G)$ and $\dH^m(\G)$ their vector- and tensor-valued counterparts, respectively. We shall use this notation consistently for other function spaces as well. We additionally write $H^0(\G) := L^2(\G)$ for the Lebesgue space on $\G$. We refer to \cite{BenavidesNochettoShakipov2026} for a careful discussion of the various definitions of Sobolev spaces on manifolds of limited regularity.

        We shall use the parentheses notation $(\cdot, \cdot)$ for the $L^2(\G)$-, $\bL^2(\G)$-, and $\dL^2(\G)$-inner products, and $\|\cdot\|$ for the respective norms. We next introduce some important closed subspaces for $m \in \bbZ_{\geq 1}$:
        \begin{itemize}
            \item \textbf{Scalar fields with vanishing mean}. We define $L^2_\# := \{v \in L^2(\G): (v, 1) = 0\} \subseteq L^2(\G)$, $H^m_\# := L^2_\# \cap H^m(\G)$, and write $H^{-m}_\# = (H^m_\#)'$ for the dual of $H^m_\#$. We similarly define $C^m_\#$ for $m \geq 0$. Moreover, the following Poincar\'e inequality is valid for Lipschitz $\G$ \cite{DziukElliott2013, BonitoDemlowNochetto2020}:
            \begin{equation} \label{eq:poincare}
                \|v\| \leq C_P \|\nablaG v\|, \quad \forall v \in H^1_\#.
            \end{equation}
            \item \textbf{Tangential vector fields}. We define $\bL^2_t := \{\bv \in \bL^2(\G) : \bv \cdot \bn = 0 \text{ on $\G$}\} \subseteq \bL^2(\G)$, and $\bH^m_t := \bL^2_t \cap \bH^m(\G)$, and write $\bH^{-m}_t := (\bH^m_t)'$. We similarly define $\bC^m_t$ for $m \geq 0$.
        \end{itemize}

    \subsection{Killing vector fields} \label{subsec:killing}
        In flat domains, the Korn inequality is classical \cite{Horgan1995, AcostaDuran2017}. For $\G$ of class $C^2$, the following Korn inequality is a consequence of \cite[Lemma 4.1]{JankuhnOlshanskiiReusken2018}, which is stated for $d = 2$ only, even though its proof is valid for $d \geq 2$:
        \begin{equation} \label{eq:korn-type}
            \|\bv\|_{\bH^1(\G)} \leq C_K (\|\bv\| + \|\DG \bv\|), \quad \forall \bv \in \bH^1_t.
        \end{equation}
        Let $\cK := \{\bv \in \bH^1_t : \DG \bv = \bzero\}$ be the space of Killing vector fields, i.e., the intrinsic rigid rotations of $\G$. From \eqref{eq:korn-type}, we find that $\|\bv_\cK\|_{\bH^1(\G)} \leq C_K \|\bv_\cK\|$ for all $\bv_\cK \in \cK$, so the unit ball in $\cK$ is compact by Rellich--Kondrashov, whence $\cK$ is finite-dimensional by the Riesz theorem. Since $\cK \subseteq \bH^1_t \subseteq \bL^2_t$ is finite-dimensional, we can write the orthogonal decomposition
        \begin{equation} \label{eq:decomposition}
            \bL^2_t = \cK \oplus \cK^\perp.
        \end{equation}
        For the same reason all norms on $\cK$ are equivalent, and decomposition \eqref{eq:decomposition} is stable in $\bH^1(\G)$ for $\G$ of class $C^2$.

        We next let $\cP_\cK : \bL^2_t \to \cK$ be the $\bL^2_t(\G)$-orthogonal projection onto $\cK$, and set $\cP_\cK^\perp := \operatorname{id} - \cP_\cK$. This allows us to define another class of subspaces: for $k \in \bbZ_{\geq 1}$,
        \begin{equation*}
                \bL^2_{t, \perp} := \{\bv \in \bL^2_t : \cP_\cK \bv = \bzero\}, \quad \bH^k_{t, \perp} := \{\bv \in \bH^k_t : \cP_\cK \bv = \bzero\}, \quad \bH^{-k}_{t, \perp} := (\bH^k_{t, \perp})',
        \end{equation*}
        and we use the convention $\bH^0_{t, \perp} := \bL^2_{t, \perp}$. A standard contradiction argument combined with \eqref{eq:korn-type} then yields the following Korn inequality:
        \begin{equation} \label{eq:korn}
            \|\bv\|_{\bH^1(\G)} \leq C_K' \|\DG \bv\|, \quad \forall \bv \in \bH^1_{t, \perp}.
        \end{equation}
        Since $\divG = \tr(\DG)$, we have $\divG \bv_\cK = 0$ for all $\bv_\cK \in \cK$. This leads to the following crucial observation: $\divG \bv_\cK = 0$ implies that $(\nablaG \phi, \bv_\cK) = 0$ for all $\bv_\cK \in \cK$, whence, for $\ell \in \{0,1\}$,
        \[
            \nablaG \phi \in \bH^\ell_{t, \perp}, \quad \forall \phi \in H^{\ell+1}_\#.
        \]
        We next show that \eqref{eq:decomposition} is also stable in $\bH^k(\G)$ for $k \geq 2$ and $\G$ of class $C^{k+1}$. To this end, we recall the identity $\bP \divG \nablaG^T \bv = \nablaG \divG \bv + \bR \bv$ for all $\bv \in \bC^2_t$, which is valid for $\G$ of class $C^3$ \cite[Lemma 2.1]{JankuhnOlshanskiiReusken2018}. A density argument as in \cite[Appendix B, proof of Lemma 2.13]{BenavidesNochettoShakipov2025}, followed by the density of $\bC^1_t$ in $\bH^1_t$, extends it to $\G$ of class $C^2$ in the following weak form:
        \begin{equation} \label{eq:identity}
            (\nablaG^T \bv, \nablaG \bw) = (\divG \bv, \divG \bw) - (\bR \bv, \bw)
        \end{equation}
        for all $\bv, \bw \in \bH^1_t$. Choosing $\bv = \bv_\cK \in \cK$ then leads to $(\nablaG \bv_\cK, \nablaG \bw) = (\bR \bv_\cK, \bw)$ for all $\bw \in \bH^1_t$, which is a weak formulation of the Bochner--Laplace problem $- \Delta_B \bv_\cK = \bR \bv_\cK$. By the $\bH^2(\G)$-regularity of the Bochner--Laplace problem, valid for $\G$ of class $C^3$ \cite{BenavidesNochettoShakipov2025}, we deduce that $\bv_\cK \in \bH^2(\G)$; bootstrapping then yields higher $\bH^k(\G)$-regularity for $k \geq 3$ and $\G$ of class $C^{k+1}$.

        As a consequence, the density of $\bH^\ell_t$ in $\bH^{\ell-1}_t$, $\ell \in \{1,2\}$, valid for $\G$ of class $C^{\ell+1}$ \cite{BenavidesNochettoShakipov2025}, together with the $\bH^\ell(\G)$-stability of \eqref{eq:decomposition} implies that $\bH^\ell_{t,\perp}$ is dense in $\bH^{\ell-1}_{t, \perp}$.

    \subsection{Norms and duality pairings} \label{ch:prelims;se:general-notation;subse:inner-prod}
        \textbf{Norms}. For a generic Banach space $\sX$, we denote its norm by $\|\cdot\|_\sX$. We endow $L^2_\# \subseteq L^2(\G)$ and $\bL^2_t \subseteq \bL^2(\G)$ with the norms of their respective superspaces.
        Let $k \in \bbZ_{\neq 0}$ and $\sV_k \in \{H^k_\#, \bH^k_t, \bH^k_{t, \perp}\}$. Whenever there is no ambiguity, we shall use the following shorthand notation for norms on $\sV_k$:
        \begin{equation}
            \|v\|_{k} := \|v\|_{\sV_k}, \qquad \forall v \in \sV_k.
        \end{equation}
        Here, $\|\cdot\|_{\sV_k}$ is a convenient equivalent norm on $\sV_k$, which we shall specify in Section~\ref{subsec:equiv-norms}.

        \textbf{Duality pairings}. For a generic Banach space $\sX$, we denote by $\langle \cdot, \cdot \rangle_{\sX' \times \sX}$ the duality pairing between $\sX$ and its dual $\sX'$. Let $k \in \bbZ_{\neq 0}$ and $\sV_k \in \{H^k_\#, \bH^k_t, \bH^k_{t, \perp}\}$. Whenever there is no ambiguity, we shall use the following shorthand notation for the duality pairings between $\sV_k$ and $(\sV_k)' \cong \sV_{-k}$ (valid also for $k < 0$ due to reflexivity): for all $u \in \sV_{-k}, v \in \sV_k$,
        \begin{equation} \label{eq:duality-pairing}
            \langle u, v \rangle_{-k, k} := \langle u, v \rangle_{\sV_{-k} \times \sV_k}.
        \end{equation}
        It will sometimes be convenient to write these duality pairings in the reverse order as well: $\langle v, u \rangle_{k, -k} := \langle u, v \rangle_{-k, k}$.

    \subsection{Bochner--Sobolev spaces} \label{subsec:boch-sobolev-spaces}
        For transient problems, we shall use the following Bochner--Sobolev spaces for $r=-2,-1,0,1,2$ and $s=0,1,2$:
        \begin{equation} \label{eq:spaces-cVr-cWr}
            \begin{split}
                \cV^r_\# := L^2(I; H^r_\#), & \quad \cW^s_\# := \cV^s_\# \cap H^1(I; H^{s-2}_\#), \\
                \bcV^r_t := L^2(I; \bH^r_t), & \quad \bcW^s_t := \bcV^s_t \cap H^1(I; \bH^{s-2}_t), \\
                \bcV^r_{t, \perp} := L^2(I; \bH^r_{t, \perp}), & \quad \bcW^s_{t, \perp} := \bcV^s_{t, \perp} \cap H^1(I; \bH^{s-2}_{t, \perp}).
            \end{split}
        \end{equation}
        Here, the $\cW, \bcW$-spaces are the natural choice for solutions of linear parabolic problems. By convention, the $\cW_\#^s$- and $\bcW_{t, \perp}^s$-norms include the trace at $t = 0$:
        \begin{equation} \label{eq:cW-norms}
            \begin{split}
                \|v\|_{\cW^s_\#} & := \left( \|v(0)\|_{s-1}^2 + \|\pa_t v\|_{\cV^{s-2}_\#}^2 + \|v\|_{\cV^s_\#}^2 \right)^{1/2}, \\
                \|\bv\|_{\bcW^s_{t, \perp}} & := \left( \|\bv(0)\|_{s-1}^2 + \|\pa_t \bv\|_{\bcV^{s-2}_{t,\perp}}^2 + \|\bv\|_{\bcV^s_{t,\perp}}^2 \right)^{1/2};
            \end{split}
        \end{equation}
        this is justified by the Lions--Magenes-type results of Section~\ref{subsec:lions-magenes}, with embedding constants that are uniform in $T$. We finally note that, for $r = 0,1,2$,
        \begin{equation}
            [\cV^r_\#]' \cong \cV^{-r}_\#, \qquad [\bcV^r_{t, \perp}]' \cong \bcV^{-r}_{t, \perp}.
        \end{equation}
        This is not the case for $\cW^s_\#$ and $\bcW^s_{t, \perp}$, whose duals have a more complicated structure.

\section{Preliminary results} \label{sec:preliminary-results}
    In this section, we collect the analytical tools needed in Section~\ref{sec:reformulations}. We first introduce spectral norms, in which $\Lambda^{1/2} = (-\DeltaG)^{1/2}$ maps each space of the Sobolev scale isometrically onto the next one. This observation reduces the Lions--Magenes lemma and the heat equation on $\G$ at shifted levels of regularity to their classical versions. We close this section by recalling a crucial geometric identity, which is the main engine behind the reformulations.
    \subsection{Equivalent norms} \label{subsec:equiv-norms}
        We now define convenient equivalent norms on $H^k_\#$ and $\bH^k_{t, \perp}$, while the spaces $H^1(\G)$ and $\bH^1(\G)$ are endowed with their full norms. Following \cite[Appendix~A.2]{OlshanskiiShakipov2026}, we define these norms spectrally. Throughout the paper, we write $\Lambda := -\DeltaG$ and $\Lambda_S := -\DeltaS$, and we use the conventions $H^0_\# := L^2_\#$ and $\langle \cdot, \cdot \rangle_{0,0} := (\cdot, \cdot)$.
 
        \textit{Scalar fields}. Since $\G$ is connected and closed, the Poincar\'e inequality \eqref{eq:poincare} and the compactness of the embedding $H^1_\# \embeds L^2_\#$ yield an $L^2$-orthonormal basis $\{\psi_i\}_{i \geq 1}$ of $L^2_\#$ consisting of eigenfunctions of $\Lambda$, with eigenvalues $0 < \lambda_1 \leq \lambda_2 \leq \dots \to \infty$ \cite[Theorem~6.11]{Brezis2011}. By the $H^2$-regularity of the Laplace--Beltrami problem \cite[Theorem~1.2]{BenavidesNochettoShakipov2026}, these eigenfunctions belong to $H^2_\#$. Let $s \in \{-2, -1, 0, 1, 2\}$ and $v \in H^s_\#$. We denote by $v_i := \langle v, \psi_i \rangle$ the coefficients of $v$, and we let $\Lambda^{s/2}$ multiply the $i$-th coefficient by $\lambda_i^{s/2}$. We then set
        \begin{equation} \label{eq:equiv-norms}
            \|v\|_s := \|\Lambda^{s/2} v\| = \left( \textstyle\sum_{i \geq 1} \lambda_i^s v_i^2 \right)^{1/2}, \qquad (u, w)_{H^s_\#} := (\Lambda^{s/2} u, \Lambda^{s/2} w).
        \end{equation}
        By the eigenvalue relation, $\|v\|_1 = \|\nablaG v\|$ and $\|v\|_2 = \|\DeltaG v\|$, and $\|\cdot\|_0$ is the $L^2$-norm. By the Cauchy--Schwarz inequality, the negative norms are the dual norms:
        \begin{equation} \label{eq:equiv-norms-dual}
            \|f\|_{-k} = \sup_{0 \neq v \in H^k_\#} \frac{\langle f, v \rangle_{-k,k}}{\|v\|_k}, \qquad k \in \{1, 2\}.
        \end{equation}
        Hence, by the Poincar\'e inequality and the $H^2$-regularity, the norm $\|\cdot\|_s$ is an equivalent norm on $H^s_\#$ for each $s \in \{-2, -1, 0, 1, 2\}$.
 
        With these norms, $\|\Lambda^{1/2} v\|_{s-1} = \|v\|_s$ by definition. In other words, $\Lambda^{1/2}$ maps each space of the scale isometrically onto the next one:
        \begin{equation} \label{eq:sqrt-isometry}
            \Lambda^{1/2} : H^s_\# \to H^{s-1}_\# \ \text{is an isometric isomorphism}, \qquad s \in \{-1, 0, 1, 2\}.
        \end{equation}
        Applying the isometry \eqref{eq:sqrt-isometry} twice shows that $\Lambda$ maps $H^s_\#$ isometrically onto $H^{s-2}_\#$ for $s \in \{0, 1, 2\}$. Here, $\Lambda$ is understood strongly for $s = 2$, weakly via $\langle \Lambda v, w \rangle_{-1,1} := (\nablaG v, \nablaG w)$ for $s = 1$, and ultraweakly via $\langle \Lambda v, w \rangle_{-2,2} := (v, \Lambda w)$ for $s = 0$. Each of these definitions extends the previous one by integration by parts and multiplies the $i$-th coefficient by $\lambda_i$. The operator $\Lambda^{1/2}$ does not depend on time. Hence, $\Lambda^{1/2}$ acts pointwise in time on the Bochner--Sobolev spaces of Section~\ref{subsec:boch-sobolev-spaces}, maps $\cV^r_\#$ isometrically onto $\cV^{r-1}_\#$ for $r \in \{-1, 0, 1, 2\}$, and commutes with $\pa_t$. Since $\Lambda^{1/2}$ also commutes with $\Lambda$, these properties reduce the shifted regularity setting $r \in \{0, 2\}$ of Sections~\ref{subsec:lions-magenes} and~\ref{subsec:parab-theory} to the classical level $r = 1$.
 
        \textit{Tangential fields}. Korn's inequality \eqref{eq:korn} and the compactness of the embedding $\bH^1_{t, \perp} \embeds \bL^2_{t, \perp}$ likewise yield an orthonormal basis of $\bL^2_{t, \perp}$ consisting of eigenfunctions of $\Lambda_S$ with positive eigenvalues. We define the norms $\|\bv\|_s$ and the inner products $(\bu, \bw)_{\bH^s_{t, \perp}}$ by the formulas \eqref{eq:equiv-norms}, with $\Lambda$ replaced by $\Lambda_S$. Then $\|\bv\|_1 = \|\DG \bv\|$ and $\|\bv\|_2 = \|\DeltaS \bv\|$, and the negative norms are the dual norms. For $\G$ of class $C^2$, Korn's inequality shows that $\|\cdot\|_s$ is an equivalent norm on $\bH^s_{t, \perp}$ for $s \in \{-1, 0, 1\}$. For $\G$ of class $C^3$, the $\bH^2$-regularity of $\DeltaS$ \cite[Theorem 6.1]{BenavidesNochettoShakipov2025} extends this statement to $s \in \{-2, 2\}$. Here, the operator $\DeltaS$ maps $\bH^2_{t, \perp}$ into $\bL^2_{t, \perp}$, because its range is orthogonal to the Killing fields: $(-\DeltaS \bv, \bw_\cK) = (\DG \bv, \DG \bw_\cK) = 0$ for all $\bw_\cK \in \cK$. Whenever all spaces involved carry equivalent norms, the isometry \eqref{eq:sqrt-isometry} and its consequences hold verbatim for $\Lambda_S^{1/2}$ and $\Lambda_S$. For $k \in \mathbb{Z}_{\geq 3}$, we shall use the natural (full) norms, and the negative norms defined accordingly.
 
    \subsection{Lions--Magenes-type results} \label{subsec:lions-magenes}
        For $r = 1$, the following result is the classical Lions--Magenes lemma in the Gelfand triple $H^1_\# \embeds L^2_\# \embeds H^{-1}_\#$. The isometry \eqref{eq:sqrt-isometry} extends the lemma to the levels $r \in \{0, 2\}$.
        \begin{proposition}[Lions--Magenes] \label{prop:lm-cW}
            Let $\G$ be of class $C^2$, let $r \in \{0,1,2\}$, and let $v \in \cW^r_\#$. Then (after possibly redefining on a set of measure zero) $v \in C^0([0,T]; H^{r-1}_\#)$, and the following stability bound is valid:
            \begin{equation} \label{eq:lm-stab}
                \max_{0 \leq t \leq T} \|v(t)\|_{r-1}^2 \leq \|v(0)\|_{r-1}^2 + \|\pa_t v\|_{\cV^{r-2}_\#}^2 + \|v\|_{\cV^r_\#}^2.
            \end{equation}
        \end{proposition}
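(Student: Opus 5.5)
The plan is to reduce every level $r$ to the classical case $r=1$ through the isometry \eqref{eq:sqrt-isometry}, and then to prove the case $r=1$ by the standard Lions--Magenes argument with an explicit constant. The constant must equal one and must not depend on $T$.

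\emph{Case $r=1$.} We work in the Gelfand triple $H^1_\# \embeds L^2_\# \embeds H^{-1}_\#$. The norms come from \eqref{eq:equiv-norms}, and $\|\cdot\|_{-1}$ is the dual norm by \eqref{eq:equiv-norms-dual}.
\begin{enumerate}
\item Continuity: the classical Lions--Magenes lemma gives $v \in C^0([0,T];L^2_\#)$ after modification on a null set.
\item Energy identity: it also gives $\frac{\d}{\dt}\|v(t)\|^2 = 2\langle \pa_t v, v\rangle_{-1,1}$ in the sense of distributions on $I$. If one wants to avoid quoting this, it follows by mollifying $v$ in time, or by truncating the expansion in the eigenbasis $\{\psi_i\}$. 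Truncation is convenient here: for each coefficient $v_i \in H^1(I)$ the identity is elementary, and one then sums over $i$.
\item Integration: integrating from $0$ to $t$ and using $2ab \le a^2+b^2$ together with duality,
\[
\|v(t)\|^2 = \|v(0)\|^2 + 2\int_0^t \langle \pa_t v, v\rangle_{-1,1}\,\ds \le \|v(0)\|^2 + \|\pa_t v\|_{\cV^{-1}_\#}^2 + \|v\|_{\cV^1_\#}^2 .
\]
\item Conclusion: take the maximum over $t\in[0,T]$.
\end{enumerate}
No constant involving $T$ appears, because we use Young's inequality directly rather than an interpolation trace estimate. The trace $v(0)$ is meaningful because of the continuity in step 1.

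\emph{Case $r \in \{0,2\}$.} Set $w := \Lambda^{(r-1)/2} v$. For $r=2$ this is $w=\Lambda^{1/2}v$; for $r=0$ it is $w=\Lambda^{-1/2}v$, the inverse of the isometry \eqref{eq:sqrt-isometry}.
\begin{enumerate}
\item By \eqref{eq:sqrt-isometry}, applied pointwise in time, $\Lambda^{\pm1/2}$ maps $\cV^{s}_\#$ isometrically onto $\cV^{s\mp1}_\#$.
\item Since $\Lambda^{\pm1/2}$ does not depend on time, it commutes with $\pa_t$, so $\pa_t w = \Lambda^{(r-1)/2}\pa_t v$. Hence
\[
\|w\|_{\cV^1_\#} = \|v\|_{\cV^r_\#}, \qquad \|\pa_t w\|_{\cV^{-1}_\#} = \|\pa_t v\|_{\cV^{r-2}_\#},
\]
and therefore $w \in \cW^1_\#$.
\item The commutation of $\pa_t$ with $\Lambda^{\pm 1/2}$ is justified by testing against $\phi\in C_c^\infty(I)$ and using the boundedness of $\Lambda^{\pm1/2}$, which commutes with Bochner integrals.
\item Apply the case $r=1$ to $w$: it gives $w\in C^0([0,T];L^2_\#)$ and the bound \eqref{eq:lm-stab} with $r=1$.
\item Transfer back with the continuous isometry $\Lambda^{-(r-1)/2}: L^2_\# \to H^{r-1}_\#$. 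This gives $v \in C^0([0,T];H^{r-1}_\#)$ and $\|v(t)\|_{r-1}=\|w(t)\|$, so the bound for $v$ follows with constant one.
\end{enumerate}

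\emph{Main obstacle.} The main point requiring care is the level $r=0$. There $\pa_t v$ lies only in $L^2(I;H^{-2}_\#)$, and one must check that the commutation and isometry statements hold for $\Lambda^{-1/2}$ on $H^{-1}_\#$ and $H^{-2}_\#$, which carry dual norms. The spectral definition handles this uniformly, since every norm in the scale is an $\ell^2$-weighted coefficient norm. The equivalence of these norms with the standard ones, which requires the $H^2$-regularity for $C^2$ surfaces, is already recorded in Section~\ref{subsec:equiv-norms}, so nothing more is needed there.
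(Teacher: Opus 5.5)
Your proposal is correct and follows essentially the same route as the paper. For $r=1$, both use the classical Lions--Magenes lemma together with the energy identity and Young's inequality. For $r\in\{0,2\}$, both reduce to $r=1$ via $w=\Lambda^{(r-1)/2}v$ and the isometry \eqref{eq:sqrt-isometry}, then transfer back with $\Lambda^{(1-r)/2}$. Your extra justifications are details the paper leaves implicit: the commutation of $\Lambda^{\pm1/2}$ with $\pa_t$ via Bochner integrals, and the optional eigenfunction-expansion proof of the energy identity.
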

        \begin{proof}
            For $r = 1$, the classical Lions--Magenes lemma \cite[Chapter~III, Lemma~1.2]{Temam1979} yields $v \in C^0([0,T]; L^2_\#)$ and $\frac{\d}{\dt} \|v\|^2 = 2 \langle \pa_t v, v \rangle_{-1,1}$ a.e.\ in $I$. Integrating this identity in time and using the Cauchy--Schwarz and Young inequalities yields the bound \eqref{eq:lm-stab}. For $r \in \{0, 2\}$, let $w := \Lambda^{(r-1)/2} v$ for $v \in \cW^r_\#$. By the isometry \eqref{eq:sqrt-isometry}, applied pointwise in time, we have $w \in \cW^1_\#$ with $\|\pa_t w\|_{\cV^{-1}_\#} = \|\pa_t v\|_{\cV^{r-2}_\#}$ and $\|w\|_{\cV^1_\#} = \|v\|_{\cV^r_\#}$. The case $r = 1$ thus yields $w \in C^0([0,T]; L^2_\#)$ and the bound \eqref{eq:lm-stab} for $w$. Since $\Lambda^{(1-r)/2}$ maps $L^2_\#$ isometrically onto $H^{r-1}_\#$, both assertions carry over to $v = \Lambda^{(1-r)/2} w$.
        \end{proof}
        The bound \eqref{eq:lm-stab} justifies the convention \eqref{eq:cW-norms} of Section~\ref{subsec:boch-sobolev-spaces}, whereby the $\cW$-norms include the trace at $t = 0$, with embedding constants that are uniform in $T$. Moreover, $\Lambda^{1/2}$ commutes with the evaluation at $t = 0$, and hence maps $\cW^r_\#$ isometrically onto $\cW^{r-1}_\#$ for $r \in \{1, 2\}$, with the norms \eqref{eq:cW-norms}.
        \begin{remark}[vector-valued counterparts of Lions--Magenes results] \label{rmk:lm-vector}
            Proposition~\ref{prop:lm-cW} (Lions--Magenes) and its proof remain valid verbatim for $\Lambda_S$ on the spaces $\bH^r_{t, \perp}, \bcV^r_{t, \perp}, \bcW^r_{t, \perp}$, with the norms of Section~\ref{subsec:equiv-norms}. These counterparts hold for $r = 1$ if $\G$ is of class $C^2$, and for $r \in \{0, 1, 2\}$ if $\G$ is of class $C^3$. The same is true for the Bochner--Laplace operator $\Lambda_B := - \Delta_B = - \bP \divG \nablaG$ on the spaces $\bH^r_t, \bcV^r_t, \bcW^r_t$, with norms defined as in the spectral norms \eqref{eq:equiv-norms}. Here, the norm $\|\nablaG \cdot\|$ is equivalent on $\bH^1_t$ for $\G$ of class $C^2$ by the Poincar\'e inequality $\|\bv\| \leq C_{P,t} \|\nablaG \bv\|$ on $\bH^1_t$ \cite{HansboLarsonLarsson2020, BenavidesNochettoShakipov2025}. The norm $\|\Lambda_B \cdot\|$ is equivalent on $\bH^2_t$ for $\G$ of class $C^3$ \cite[\S3]{BenavidesNochettoShakipov2025}.
 
            In particular, for $\G$ of class $C^2$, functions in $\bcW^1_{t, \perp}$ and $\bcW^1_t$ are continuous in time with values in $\bL^2_{t, \perp}$ and $\bL^2_t$, respectively. For $\G$ of class $C^3$, functions in $\bcW^2_{t, \perp}$ and $\bcW^2_t$ are continuous in time with values in $\bH^1_{t, \perp}$ and $\bH^1_t$, respectively.
        \end{remark}
 
    \subsection{Parabolic theory} \label{subsec:parab-theory}
        We now consider the heat equation $\pa_t \varphi + 2 \mu \Lambda \varphi = h$ with initial condition $\varphi(0) = h_0$, at two levels of regularity: the weak one ($r = 1$) and the ultraweak one ($r = 0$). The latter is instrumental in the reformulation of the transient tangent Stokes equations; see the proof of Lemma~\ref{lem:data-maps} (data maps). The weak level is classical, and the ultraweak level follows from it by the isometry \eqref{eq:sqrt-isometry}.
        \begin{proposition}[heat equation] \label{prop:heat}
            Let $\G$ be of class $C^2$ and $r \in \{0,1\}$. Given data $(h_0, h) \in H^{r-1}_\# \times \cV^{r-2}_\#$, there exists a unique $\varphi \in \cW^r_\#$ such that
            \begin{equation} \label{eq:heat}
                \varphi(0) = h_0 \ \text{ in } H^{r-1}_\#, \qquad \pa_t \varphi + 2 \mu \Lambda \varphi = h \ \text{ in } \cV^{r-2}_\#,
            \end{equation}
            where $\Lambda$ is understood weakly for $r = 1$ and ultraweakly for $r = 0$. Furthermore, there exist $C_1, C_2 > 0$ depending only on $\mu$ (and not on $T$) such that
            \begin{equation} \label{eq:heat-bounds}
                C_1 \|(h_0, h)\|_{H^{r-1}_\# \times \cV^{r-2}_\#} \leq \|\varphi\|_{\cW^r_\#} \leq C_2 \|(h_0, h)\|_{H^{r-1}_\# \times \cV^{r-2}_\#},
            \end{equation}
            where $\|\varphi\|_{\cW^r_\#} = \left( \|\varphi(0)\|_{r-1}^2 + \int_I \|\pa_t \varphi\|_{r-2}^2 + \|\varphi\|_r^2 \dt \right)^{1/2}$ is the $\cW$-norm \eqref{eq:cW-norms}.
        \end{proposition}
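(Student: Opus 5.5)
The plan is to settle the weak level $r = 1$ by the classical Lions theory in the Gelfand triple $H^1_\# \embeds L^2_\# \embeds H^{-1}_\#$, and then to transfer the result to $r = 0$ with the isometry \eqref{eq:sqrt-isometry}. This is the same pattern as in the proof of Proposition~\ref{prop:lm-cW} (Lions--Magenes).

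\emph{Case $r = 1$.} The bilinear form $a(v,w) := 2\mu(\nablaG v, \nablaG w)$ is bounded on $H^1_\#$. It is also coercive there with constant exactly $2\mu$, because the spectral norm satisfies $\|v\|_1 = \|\nablaG v\|$. Lions' theorem (for instance via Galerkin approximation in the eigenbasis $\{\psi_i\}$) therefore yields a unique $\varphi \in \cW^1_\#$ with $\varphi(0) = h_0 \in L^2_\#$ and $\pa_t\varphi + 2\mu\Lambda\varphi = h$ in $\cV^{-1}_\#$. Uniqueness follows by testing the difference of two solutions with itself and using Proposition~\ref{prop:lm-cW}.

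For $T$-independent bounds I would work coefficientwise, which is exact in the spectral norms. Write $\varphi_i' + 2\mu\lambda_i\varphi_i = h_i$. For the upper bound, test with $\varphi$ and use the Lions--Magenes energy identity together with Young's inequality to get
\[
\|\varphi(t)\|^2 + 2\mu\int_0^t\|\varphi\|_1^2 \le \|h_0\|^2 + \tfrac{1}{2\mu}\int_0^t\|h\|_{-1}^2 .
\]
Then use $\|\pa_t\varphi\|_{-1} \le \|h\|_{-1} + 2\mu\|\varphi\|_1$. Since $\varphi(0) = h_0$, this gives the upper bound with $C_2$ depending only on $\mu$. The lower bound is immediate from the equation: $\|h\|_{\cV^{-1}_\#} \le \|\pa_t\varphi\|_{\cV^{-1}_\#} + 2\mu\|\varphi\|_{\cV^1_\#}$ and $\|h_0\| = \|\varphi(0)\|$, so $C_1 = (1+4\mu^2)^{-1/2}$ up to harmless constants. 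No step depends on $T$, since only the coercivity constant and the exact isometries enter.

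\emph{Case $r = 0$.} Given $(h_0,h) \in H^{-1}_\# \times \cV^{-2}_\#$, set $\tilde h_0 := \Lambda^{-1/2}h_0 \in L^2_\#$ and $\tilde h := \Lambda^{-1/2}h \in \cV^{-1}_\#$. Both maps are isometric by \eqref{eq:sqrt-isometry}. Solve the $r=1$ problem to obtain $\tilde\varphi \in \cW^1_\#$, and define $\varphi := \Lambda^{1/2}\tilde\varphi$. This lies in $\cW^0_\#$ isometrically, because $\Lambda^{1/2}$ commutes with $\pa_t$ and with evaluation at $t=0$ (see the remark after Proposition~\ref{prop:lm-cW}).

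Next, apply $\Lambda^{1/2}$ to the weak equation. Since $\Lambda^{1/2}$ commutes with $\Lambda$, this gives $\pa_t\varphi + 2\mu\Lambda\varphi = h$ in $\cV^{-2}_\#$ with $\Lambda$ ultraweak. The one point to check is consistency of the ultraweak and weak definitions of $\Lambda$. Both multiply the $i$-th coefficient by $\lambda_i$, so $\Lambda\Lambda^{1/2} = \Lambda^{1/2}\Lambda$ coefficientwise. Uniqueness and the bounds \eqref{eq:heat-bounds} with the same constants transfer back through the same isometry: any $r=0$ solution $\varphi$ yields an $r=1$ solution $\Lambda^{-1/2}\varphi$.

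\emph{Main obstacle.} I expect the only delicate step to be this consistency of operator definitions across levels, i.e.\ that the shifted problem is literally equivalent to the weak one rather than merely analogous. The coefficient representation in the eigenbasis $\{\psi_i\}$ of $L^2_\#$ makes this transparent, and I would argue it there.
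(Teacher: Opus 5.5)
Your proposal is correct and follows the paper's proof. The paper also settles the weak case $r=1$ by Lions' theorem, using coercivity with constant $2\mu$ in the spectral norm $\|\cdot\|_1 = \|\nablaG \cdot\|$ and the standard energy estimate, and then transfers to the ultraweak case $r=0$ via the isometry $\Lambda^{1/2}$, which commutes with $\Lambda$, $\pa_t$, and evaluation at $t=0$; your explicit energy bound and the lower bound with $C_1 = (1+4\mu^2)^{-1/2}$ just fill in the details the paper leaves as ``standard''.
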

        \begin{proof}
            For $r = 1$, existence and uniqueness follow from the Lions theorem \cite[Vol.~I, Ch.~3, \S4]{LionsMagenes1972}, and the two-sided estimate \eqref{eq:heat-bounds} follows from the standard energy estimate. The constants depend only on $\mu$, because the bilinear form $2 \mu (\nablaG \cdot, \nablaG \cdot)$ is coercive on $H^1_\#$ endowed with $\|\cdot\|_1 = \|\nablaG \cdot\|$. For $r = 0$, the operator $\Lambda^{1/2}$ commutes with $\Lambda$. By the isometry \eqref{eq:sqrt-isometry} and Proposition~\ref{prop:lm-cW} (Lions--Magenes), the operator $\Lambda^{1/2}$ maps the solution space $\cW^1_\#$ and the data space $L^2_\# \times \cV^{-1}_\#$ of the case $r = 1$ isometrically onto those of the case $r = 0$. Hence, $\varphi$ solves the heat equation \eqref{eq:heat} with $r = 0$ and data $(h_0, h)$ if and only if $\Lambda^{-1/2} \varphi$ solves it with $r = 1$ and data $(\Lambda^{-1/2} h_0, \Lambda^{-1/2} h)$, and all norms in the two-sided estimate \eqref{eq:heat-bounds} are preserved.
        \end{proof}

    \subsection{Decomposition of the vector Laplacian} \label{subsec:stokes-iden}
        The following lemma, proven in \cite[Lemma 2.13]{BenavidesNochettoShakipov2025}, states the commutator identity between the tangential divergence and $\DeltaS$. This identity produces the cancellation behind the reformulations; see the commutator identity \eqref{eq:commutator}. Given its importance, below we present its short proof for completeness.
        \begin{lemma}[decomposition of $\DeltaS$] \label{lem:stokes-iden}
            Let $\G$ be of class $C^2$. Then for all $(\bv, \phi) \in \bH^1_t \times H^2_\#$,
            \begin{equation} \label{eq:stokes-iden-weak}
                (\DG \bv, \DG (\nablaG \phi)) = (\divG \bv, \DeltaG \phi) - (\bR \bv, \nablaG \phi),
            \end{equation}
            where $\bR = \tr(\bW)\bW - \bW \bW$ with $\bW$ being the Weingarten map. If $\G$ is of class $C^3$, then the following stronger version of \eqref{eq:stokes-iden-weak} is valid for all $(\bv, \phi) \in \bH^2_t \times H^1_\#$:
            \begin{equation} \label{eq:stokes-iden-stronger}
                (\DeltaS \bv, \nablaG \phi) = (\nablaG \divG \bv + \bR \bv, \nablaG \phi).
            \end{equation}
            Finally, if $\G$ is of class $C^4$, then the strong version of \eqref{eq:stokes-iden-stronger} is valid a.e.\ on $\G$ for all $\bv \in \bH^3_t$:
            \begin{equation} \label{eq:stokes-iden-strong}
                \divG (\DeltaS \bv) = \DeltaG (\divG \bv) + \divG(\bR \bv).
            \end{equation}
        \end{lemma}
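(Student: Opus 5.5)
The plan is to derive the weak identity \eqref{eq:stokes-iden-weak} directly from the weak commutator identity \eqref{eq:identity}. We then obtain the two stronger versions by integration by parts under the extra regularity.

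\textbf{Weak identity.} Fix $(\bv,\phi)\in\bH^1_t\times H^2_\#$ and set $\bw:=\nablaG\phi\in\bH^1_t$. The symmetry of $\nablaG\bw=\bP\nablaG^2\phi\,\bP$ should hold for $\phi\in H^2$ on a $C^2$ surface; I would check this first for smooth $\phi$ and then argue by density of $C^2$ in $H^2_\#$, using the $C^1$ normal. Granting it, $\DG\bw=\nablaG\bw=\nablaG^T\bw$. Since $\DG\bw$ is symmetric, only the symmetric part of $\nablaG\bv$ survives in the contraction, so
\[
(\DG\bv,\DG\bw)=(\nablaG\bv,\nablaG\bw)=(\nablaG^T\bv,\nablaG^T\bw)=(\nablaG^T\bv,\nablaG\bw).
\]
Applying \eqref{eq:identity} with this $\bw$ gives $(\divG\bv,\divG\bw)-(\bR\bv,\bw)$. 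Since $\divG\nablaG\phi=\DeltaG\phi$, this is exactly \eqref{eq:stokes-iden-weak}. The main obstacle is justifying the symmetry of the covariant Hessian and the membership $\nablaG\phi\in\bH^1_t$ at $C^2$ regularity. I would rely on the density argument referenced for \eqref{eq:identity} together with the $H^2$-regularity of the Laplace--Beltrami problem.

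\textbf{Stronger version.} Now let $\G$ be $C^3$ and $\bv\in\bH^2_t$. First take $\phi\in H^2_\#$. Integrating by parts on the closed surface, with tangential test fields and $\DeltaS=\bP\divG\DG$, gives
\[
(\DG\bv,\DG\nablaG\phi)=-(\DeltaS\bv,\nablaG\phi).
\]
Likewise $(\divG\bv,\DeltaG\phi)=-(\nablaG\divG\bv,\nablaG\phi)$, which is valid since $\divG\bv\in H^1$. Substituting both into \eqref{eq:stokes-iden-weak} yields \eqref{eq:stokes-iden-stronger} for $\phi\in H^2_\#$. Both sides are continuous in $\phi$ with respect to the $H^1$ norm, because $\DeltaS\bv$, $\nablaG\divG\bv$ and $\bR\bv$ all lie in $\bL^2$. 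Density of $H^2_\#$ in $H^1_\#$ then extends the identity to all $\phi\in H^1_\#$.

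\textbf{Strong version.} Finally let $\G$ be $C^4$ and $\bv\in\bH^3_t$. Then $\DeltaS\bv-\nablaG\divG\bv-\bR\bv\in\bH^1_t$, using that $\bR\in C^1$. Integrating \eqref{eq:stokes-iden-stronger} by parts once more gives $(\divG(\cdots),\phi)=0$ for all $\phi\in H^1_\#$. Divergences of tangential fields have zero mean on a closed surface, so the divergence itself vanishes a.e. Since $\divG\nablaG\divG\bv=\DeltaG\divG\bv$, this is \eqref{eq:stokes-iden-strong}.
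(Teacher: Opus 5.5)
Your proposal is correct and follows the paper's proof. You substitute $\bw=\nablaG\phi$ into \eqref{eq:identity}, use the symmetry of the covariant Hessian (the paper cites this from the literature rather than re-deriving it by density), and obtain \eqref{eq:stokes-iden-stronger} and \eqref{eq:stokes-iden-strong} by integration by parts, spelling out the density extension to $\phi\in H^1_\#$ and the zero-mean argument that the paper leaves implicit; the appeal to $H^2$-regularity of the Laplace--Beltrami problem is superfluous, since it is not needed for $\nablaG\phi\in\bH^1_t$ when $\phi\in H^2_\#$.
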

        \begin{proof}
            Recall identity \eqref{eq:identity}: for all $\bv, \bw \in \bH^1_t$, $(\nablaG^T \bv, \nablaG \bw) = (\divG \bv, \divG \bw) - (\bR \bv, \bw)$. Picking $\bw = \nablaG \phi$ for $\phi \in H^2_\#$ yields
            \begin{equation*}
                (\nablaG^T \bv, \nablaG (\nablaG \phi)) = (\divG \bv, \DeltaG \phi) - (\bR \bv, \nablaG \phi).
            \end{equation*}
            Since the covariant Hessian $\nablaG (\nablaG \phi)$ is symmetric \cite[Proposition B.2]{BenavidesNochettoShakipov2025}, we deduce that $\nablaG (\nablaG \phi) = \DG(\nablaG \phi)$ and consequently infer \eqref{eq:stokes-iden-weak}. Identities \eqref{eq:stokes-iden-stronger} and \eqref{eq:stokes-iden-strong} then follow by integration by parts.
        \end{proof}

\section{Reformulation} \label{sec:reformulations}
    In this section, we derive and analyze reformulations of the tangent Stokes (TS), transient tangent Stokes (TTS), and transient tangent Navier--Stokes (TTNS) equations. The reformulation arises by applying a linear and invertible \textit{data map} $\cD$ to both sides of the primal equation $\cA x = y$, much like a row operation in Gaussian elimination. The row operation is chosen so that the composition $\cD \circ \cA$ simplifies to the operator $\cB$ of the reformulation; the simplification rests on the commutator identity \eqref{eq:commutator} below. In $\cB$, the pressure Poisson equation replaces the divergence equation of $\cA$. Unlike the divergence equation, the pressure Poisson equation contains the pressure, through the invertible operator $-\DeltaG$. In this sense, the reformulation circumvents the saddle-point structure. Once the factorization $\cB = \cD \circ \cA$ is available, the equivalence of the primal problem and its reformulation is elementary: it follows by applying $\cD^{-1}$. The main delicate point is the functional setting. The spaces must be chosen so that the primal problem is well-posed, the data map is a homeomorphism, and each step of the formal computation is meaningful.

    We proceed as follows. In Section~\ref{subsec:reform-of-stokes-type-pbs}, we derive the reformulations formally and explain which functional settings to expect. Section~\ref{subsec:abstract-reformulation} records the abstract framework. In Section~\ref{subsec:settings}, we fix the functional settings for TS, TTS, and TTNS. We also show there that the data maps are homeomorphisms. Section~\ref{subsec:factorization} verifies $\cB = \cD \circ \cA$ in all three settings at once. Finally, Section~\ref{subsec:main-results} states the main results in an unconditional form: each reformulation is well-posed if and only if the corresponding primal problem is. It then comments on what is known about the primal problems.

    \subsection{Formal derivation} \label{subsec:reform-of-stokes-type-pbs}
        Recall the general fluid problem \eqref{eq:tts-ttns} from the introduction:
        \begin{subequations} \label{eq:tts-ttns-copy}
            \begin{align}
                \alpha_1 \pa_t \bu + \alpha_2 \bu \cdot \nablaG \bu - 2 \mu \DeltaS \bu + \nablaG u & = \bf, \label{eq:tts-ttns-mom} \\
                \divG \bu & = f, \label{eq:tts-ttns-div} \\
                \alpha_1 \bu(0) & = \alpha_1 \bf_0.
            \end{align}
        \end{subequations}
        In the transient case $\alpha_1 = 1$, the initial condition $\bu(0) = \bf_0$ and the divergence equation at $t = 0$ force the compatibility condition $\divG \bf_0 = f(0)$. This makes the data space a constrained subspace of a product space. The dual of such a space, which contains the test functions, is awkward to describe. We therefore relax the initial condition $\bu(0) = \bf_0$ using a Lagrange multiplier $u_0$ with vanishing mean:
        \begin{equation} \label{eq:new-ic}
            \bu(0) + \nablaG u_0 = \bf_0.
        \end{equation}
        Taking $-\divG$ of the relaxed initial condition \eqref{eq:new-ic} and using $\divG \bu(0) = f(0)$ yields
        \begin{equation} \label{eq:new-ic-pressure}
            - \DeltaG u_0 = f(0) - \divG \bf_0,
        \end{equation}
        so that $u_0 = 0$ if and only if the compatibility condition $\divG \bf_0 = f(0)$ holds. Thus, \eqref{eq:new-ic} is consistent with \eqref{eq:tts-ttns-copy}, but lets \textit{the data range over a full product space}. The augmented problem reads: given $y = (\bf_0, \bf, f)$, find $x = (u_0, \bu, u)$ such that
        \begin{equation} \label{eq:stokes-type-aug}
            \begin{cases}
                \bu(0) + \nablaG u_0 \kern-0.72em & = \bf_0, \\
                \alpha_1 \pa_t \bu + \alpha_2 \bu \cdot \nablaG \bu - 2 \mu \DeltaS \bu + \nablaG u \kern-0.72em & = \bf, \\
                \divG \bu \kern-0.72em & = f,
            \end{cases}
        \end{equation}
        which we write compactly as $\cA x = y$. If $\alpha_1 = 0$, then the problem is stationary and has no initial condition. In this case, the unknown $u_0$, the datum $\bf_0$, and the first equation in \eqref{eq:stokes-type-aug} are absent. The same convention applies to all formulas in this section: everything associated with the initial condition is dropped when $\alpha_1 = 0$.

        We now derive the pressure Poisson equation. We take $-\divG$ of the momentum equation in the augmented problem \eqref{eq:stokes-type-aug}. We then use $\divG \pa_t \bu = \pa_t \divG \bu$ and identity \eqref{eq:stokes-iden-strong}, i.e., $\divG (\DeltaS \bu) = \DeltaG (\divG \bu) + \divG(\bR \bu)$. These two identities yield
        \begin{equation} \label{eq:div-of-momentum}
            - (\alpha_1 \pa_t - 2 \mu \DeltaG) \divG \bu - \alpha_2 \divG (\bu \cdot \nablaG \bu) + 2 \mu \, \divG (\bR \bu) - \DeltaG u = - \divG \bf.
        \end{equation}
        Since $\divG \bu = f$, the first term equals $- (\alpha_1 \pa_t - 2 \mu \DeltaG) f$. Moving it to the right-hand side yields the pressure Poisson equation from \eqref{eq:intro-pressure-poi}:
        \begin{equation} \label{eq:intro-pressure-poi-copy}
            - \DeltaG u - \alpha_2 \divG (\bu \cdot \nablaG \bu) + 2 \mu \, \divG (\bR \bu) = - \divG \bf + (\alpha_1 \pa_t - 2 \mu \DeltaG) f.
        \end{equation}
        In other words, the pressure Poisson equation is the sum of two equations: the momentum equation with $-\divG$ applied to both sides, and the divergence equation with $\alpha_1 \pa_t - 2 \mu \DeltaG$ applied to both sides. Likewise, the initial Poisson equation \eqref{eq:new-ic-pressure} is the sum of the relaxed initial condition \eqref{eq:new-ic} with $-\divG$ applied to both sides and the divergence equation evaluated at $t = 0$. These are linear operations on the equations of the augmented problem \eqref{eq:stokes-type-aug}, performed on both sides at once, much like row operations in Gaussian elimination.

        The reformulated system keeps the initial condition \eqref{eq:new-ic} and the momentum equation \eqref{eq:tts-ttns-mom}. However, it \textit{replaces the divergence equation \eqref{eq:tts-ttns-div} with the pressure Poisson equation \eqref{eq:intro-pressure-poi-copy}}. If $\alpha_1 = 1$, it also adds the initial Poisson equation \eqref{eq:new-ic-pressure} at $t = 0$:
        \begin{equation} \label{eq:reformulations-eq}
            \begin{cases}
                \bu(0) + \nablaG u_0 \kern-0.72em & = \bf_0, \\
                \alpha_1 \pa_t \bu + \alpha_2 \bu \cdot \nablaG \bu - 2 \mu \DeltaS \bu + \nablaG u \kern-0.72em & = \bf, \\
                - \DeltaG u_0 \kern-0.72em & = f(0) - \divG \bf_0, \\
                - \DeltaG u - \alpha_2 \divG (\bu \cdot \nablaG \bu) + 2 \mu \, \divG (\bR \bu) \kern-0.72em & = - \divG \bf + \alpha_1 \pa_t f - 2 \mu \DeltaG f.
            \end{cases}
        \end{equation}
        The reformulated system \eqref{eq:reformulations-eq} results from applying the above operations to both sides of the augmented problem \eqref{eq:stokes-type-aug}. Hence, its right-hand side is $\cD y$. Here, the linear \textit{data map} $\cD$ records these operations; it is \textit{block triangular}:
        \begin{equation} \label{eq:data-map-formal-full}
            \cD
            \left[\begin{array}{c}
                \bf_0 \\
                \bf \\
                \hline
                f
            \end{array}\right]
            =
            \left[\begin{array}{cc|c}
                \operatorname{id} & 0 & 0 \\
                0 & \operatorname{id} & 0 \\
                \hline
                -\divG & 0 & \gamma_0 \\
                0 & -\divG & \alpha_1 \pa_t - 2\mu\DeltaG
            \end{array}\right]
            \left[\begin{array}{c}
                \bf_0 \\
                \bf \\
                \hline
                f
            \end{array} \right], \qquad \gamma_0 f := f(0).
        \end{equation}
        The left-hand side of the reformulated system \eqref{eq:reformulations-eq}, in turn, is obtained by applying $\cD$ to the left-hand side $\cA x$ of the augmented problem \eqref{eq:stokes-type-aug}:
        \begin{equation} \label{eq:DA-formal}
            \begin{split}
            \cD (\cA x) & =
            \cD
            \left[\begin{array}{c}
                \bu(0) + \nablaG u_0 \\
                \alpha_1 \pa_t \bu + \alpha_2 \bu \cdot \nablaG \bu - 2 \mu \DeltaS \bu + \nablaG u \\
                \hline
                \divG \bu
            \end{array}\right] \\
            & =
            \left[\begin{array}{c}
                \bu(0) + \nablaG u_0 \\
                \alpha_1 \pa_t \bu + \alpha_2 \bu \cdot \nablaG \bu - 2 \mu \DeltaS \bu + \nablaG u \\
                \hline
                (\divG \bu)(0) - \divG \big( \bu(0) + \nablaG u_0 \big) \\
                (\alpha_1 \pa_t - 2 \mu \DeltaG) \divG \bu - \divG \big( \alpha_1 \pa_t \bu + \alpha_2 \bu \cdot \nablaG \bu - 2 \mu \DeltaS \bu + \nablaG u \big)
            \end{array}\right].
            \end{split}
        \end{equation}
        The first two components are those of $\cA x$. The last two simplify by means of four calculus rules:
        \begin{equation} \label{eq:calculus-rules}
            \begin{aligned}
                & \text{(R1)} \quad \divG \nablaG = \DeltaG, & \qquad & \text{(R2)} \quad \divG \DeltaS (\cdot) = \DeltaG \divG (\cdot) + \divG (\bR \cdot), \\
                & \text{(R3)} \quad \divG \pa_t = \pa_t \divG, & \qquad & \text{(R4)} \quad (\divG \bv)(0) = \divG (\bv(0)).
            \end{aligned}
        \end{equation}
        Rule (R1) is the definition of $\DeltaG$, and (R2) is, formally, identity \eqref{eq:stokes-iden-strong}. Rules (R3) and (R4) hold because $\G$ is stationary. Rigorous versions for $\G$ of class $C^2$ and $C^3$ are proven in Lemma~\ref{lem:calculus} (calculus rules). By (R4) and (R1), the third component of the composition \eqref{eq:DA-formal} equals
        \begin{equation*}
            (\divG \bu)(0) - \divG \big( \bu(0) + \nablaG u_0 \big) = - \DeltaG u_0.
        \end{equation*}
        In the fourth component of \eqref{eq:DA-formal}, the terms $\pm \alpha_1 \pa_t \divG \bu$ cancel by (R3). By (R2), the terms involving $\DeltaS \bu$ and $\DeltaG \divG \bu$ reduce to $2 \mu \, \divG (\bR \bu)$, yielding
        \begin{equation*}
            \begin{split}
                & \alpha_1 \big( \pa_t \divG \bu - \divG \pa_t \bu \big) + 2 \mu \big( \divG \DeltaS \bu - \DeltaG \divG \bu \big) - \divG \nablaG u - \alpha_2 \divG (\bu \cdot \nablaG \bu) \\
                & \qquad = 2 \mu \, \divG (\bR \bu) - \DeltaG u - \alpha_2 \divG (\bu \cdot \nablaG \bu).
            \end{split}
        \end{equation*}
        Hence, $\cD(\cA x)$ is precisely the left-hand side of the reformulated system \eqref{eq:reformulations-eq}. We denote it by $\cB x$. The cancellation is no accident. Together, the rules (R2) and (R3) yield, for tangential fields $\bv$, the commutator identity
        \begin{equation} \label{eq:commutator}
            \divG \big( (\alpha_1 \pa_t - 2 \mu \DeltaS) \bv \big) = (\alpha_1 \pa_t - 2 \mu \DeltaG) \divG \bv - 2 \mu \, \divG (\bR \bv).
        \end{equation}
        Thus, the tangential divergence turns the vector operator $\alpha_1 \pa_t - 2 \mu \DeltaS$ into its scalar counterpart $\alpha_1 \pa_t - 2 \mu \DeltaG$, up to the curvature term. The data map applies precisely this scalar counterpart to the divergence equation. Therefore, the two terms in which the scalar counterpart acts on $\divG \bu$ cancel, and the velocity enters the pressure Poisson equation only through the curvature term $2 \mu \, \divG (\bR \bu)$ and the convective term. Since $\cB x = \cD (\cA x)$, the reformulation of $\cA x = y$ is $\cB x = \cD y$. With general data $z$, it reads
        \begin{equation} \label{eq:reformulations-comp}
            \cB x = z.
        \end{equation}
        General data arise naturally. With data of the form $z = \cD y$, the reformulation is solvable, since every solution of the primal problem solves it. In practice, however, the data of the reformulation are perturbed, e.g., by discretization, and thus need not be of the form $\cD y$. It is therefore not obvious at first glance that the reformulated problem $\cB x = z$ is solvable for every $z$, let alone stably. If the primal problem is solvable for all data $y$, then this question reduces to whether every $z$ is of the form $\cD y$, i.e., whether the data map $\cD$ is onto. As we show next, the structure of $\cD$ makes it invertible.

        We point out two salient features of the data map \eqref{eq:data-map-formal-full}. First, $\cD$ is linear and independent of $\alpha_2$: the convective term affects $\cA$ and $\cB$, but not the data map. Second, $\cD$ is \textit{block triangular}, i.e., it can be written as 
        \begin{equation} \label{eq:cD-y1-y2}
            \cD(y_1, y_2) = (y_1, M y_1 + L y_2),
        \end{equation}
        where $y_1 = (\bf_0, \bf)$ collects the initial and momentum data, and $y_2 = f$ is the divergence datum. The off-diagonal block $M$ acts on the momentum (and initial) data, and the diagonal block $L$ acts on the divergence datum:
        \begin{equation} \label{eq:operators-K-L}
            M y_1 = (-\divG \bf_0, - \divG \bf), \qquad
            L y_2 =
            \begin{cases}
                - 2 \mu \DeltaG f & \text{ if } \alpha_1 = 0, \\
                (f(0), \pa_t f - 2 \mu \DeltaG f) & \text{ if } \alpha_1 = 1.
            \end{cases}
        \end{equation}
        The diagonal block $L$ is responsible for the reversibility of the reformulation. Indeed, let $\cB x = \cD y$. Since $\cB x = \cD (\cA x)$ by definition and $\cD$ is linear, we have $\cD (\cA x - y) = 0$. We write $\cA x - y = (r_1, r_2)$, where $r_1$ collects the residuals of the initial condition and the momentum equation, and $r_2 = \divG \bu - f$ is the residual of the divergence equation. By the block structure \eqref{eq:cD-y1-y2},
        \begin{equation*}
            (0, 0) = \cD (r_1, r_2) = (r_1, M r_1 + L r_2).
        \end{equation*}
        The first component gives $r_1 = 0$, i.e., the initial condition and the momentum equation hold. The second component then reduces to $L r_2 = L (\divG \bu - f) = 0$. Hence, the divergence equation is recovered, because the Poisson ($\alpha_1 = 0$) or heat ($\alpha_1 = 1$) problem with zero data has only the trivial solution. In the same way, the invertibility of $L$ makes $\cD$ invertible.

        \textit{Strategy}. If $\cD$ is a linear homeomorphism, then the identity
        \begin{equation} \label{eq:B=DA}
            \cB = \cD \circ \cA
        \end{equation}
        is equivalent to $\cA = \cD^{-1} \circ \cB$. Consequently, $\cA x = y$ if and only if $\cB x = \cD y$. Moreover, $\cB$ is a homeomorphism if and only if $\cA$ is; see Section~\ref{subsec:abstract-reformulation}. For each of the three problems, it thus remains to
        \begin{enumerate}
            \item[(a)] choose spaces $\sX$, $\sY$, $\sZ$ such that $\cA : \sX \to \sY$ and $\cB : \sX \to \sZ$ are well defined, the primal problem can be expected to be well-posed, and the data map $\cD : \sY \to \sZ$ is a linear homeomorphism. By the triangular structure of $\cD$, the latter amounts to the invertibility of $L$, which is a Poisson or a heat operator.
            \item[(b)] verify the factorization \eqref{eq:B=DA} in these spaces. This verification amounts to justifying the rules (R1)--(R4) at the regularity at hand.
        \end{enumerate}
        Once item (a) is settled, item (b) is a matter of rigorous calculus. Item (a) is the one that requires some care.

        \textit{Functional setting}. The data map differentiates the divergence datum: it contains $-2 \mu \DeltaG f$ and, in the transient case, $\pa_t f$ and $f(0)$. We call the pressure Poisson equation and, if $\alpha_1 = 1$, the initial Poisson equation the \textit{pressure equations}. The space of divergence data is dictated by where $\divG \bu$ lands for an admissible velocity $\bu$. It then remains to check that $L$ maps this space onto the space of data of the pressure equations. The latter is dictated by $-\divG$ of the momentum data and by $-\DeltaG$ of the pressure.
        \begin{itemize}
            \item \textit{TS}. In the energy setting $(\bu, u) \in \bH^1_{t, \perp} \times L^2_\#$, we have $\divG \bu \in L^2_\#$, so the divergence data lie in $L^2_\#$. The Poisson operator $-2 \mu \DeltaG$ maps $L^2_\#$ onto $H^{-2}_\#$. This is consistent with the rest: the momentum datum lies in $\bH^{-1}_{t, \perp}$, and both $\divG \bf$ and $\DeltaG u$ lie in $H^{-2}_\#$. The price to pay is that the pressure Poisson equation holds only ultraweakly, i.e., it is tested with $H^2_\#$-functions. An $H^1$-based setting is recovered a posteriori if its datum lies in $H^{-1}_\#$; see Remark~\ref{rmk:equal-order} (equal-order formulations).
            \item \textit{TTS}. In the energy setting, the velocity satisfies $\bu \in L^2(I; \bH^1_{t, \perp}) \cap H^1(I; \bH^{-1}_{t, \perp})$ and the pressure satisfies $u \in L^2(I; L^2_\#)$. Applying $\divG$ to $\bu$ gives $\divG \bu \in L^2(I; L^2_\#) \cap H^1(I; H^{-2}_\#) = \cW^0_\#$. Hence, the divergence data lie in $\cW^0_\#$; in particular, \textit{the divergence datum must possess a time derivative}. The rest is consistent. The heat operator $f \mapsto (f(0), \pa_t f - 2 \mu \DeltaG f)$ maps $\cW^0_\#$, the solution space of the ultraweak heat equation, onto $H^{-1}_\# \times \cV^{-2}_\#$. These are exactly the spaces in which the data of the pressure equations lie, since the momentum datum lies in $\bcV^{-1}_{t, \perp}$ and the initial datum in $\bL^2_{t, \perp}$. Two points require care. First, the well-posedness of TTS with divergence data in $\cW^0_\#$ is not standard; see Section~\ref{subsec:main-results}. Second, since $f(0) \in H^{-1}_\#$, the compatibility condition $\divG \bf_0 = f(0)$ is an identity in $H^{-1}_\#$ that constrains the data. The Lagrange multiplier $u_0 \in H^1_\#$ in the relaxed initial condition \eqref{eq:new-ic} removes this constraint.
            \item \textit{TTNS}. If $\bu$ is not solenoidal, then the convective term is not controlled by the energy estimates; see Section~\ref{subsec:reform-of-ns}. This forces a strong setting, in which everything shifts up by one order of spatial regularity. Thus, the velocity lies in $\bcW^2_t$, the divergence datum lies in $\cW^1_\#$, and the pressure equations have data in $L^2_\# \times \cV^{-1}_\#$. The heat equation for $f$ is understood weakly. 
        \end{itemize}
        These settings are summarized in Table~\ref{tab:settings} of Section~\ref{subsec:settings}.

        \begin{remark}[Killing fields] \label{rmk:killing}
            The three problems require different treatments of the Killing fields $\cK$ from Section~\ref{subsec:killing}.
            \begin{itemize}
                \item \textit{TS}. Since $\DG \bv_\cK = \bzero$ and $\divG \bv_\cK = 0$ for all $\bv_\cK \in \cK$, the stationary problem determines the velocity only modulo $\cK$. Moreover, it is solvable only if $\bf \perp \cK$. We therefore pose it on the spaces $\bH^k_{t, \perp}$ in Section~\ref{subsec:reform-of-stokes}.
                \item \textit{TTS}. Suppose that TTS is posed on the full spaces $\bH^k_t$. We test with $\bw_\cK \in \cK$ and use $\DG \bw_\cK = \bzero$, $\divG \bw_\cK = 0$, and $(\nablaG u_0, \bw_\cK) = 0$. This test shows that the Killing component of the velocity decouples and solves the ODE
                \begin{equation*}
                    \frac{\d}{\dt} (\bu, \bw_\cK) = \langle \bf, \bw_\cK \rangle_{-1,1}, \qquad (\bu(0), \bw_\cK) = (\bf_0, \bw_\cK).
                \end{equation*}
                By linearity, $\bu - \cP_\cK \bu$ solves the problem with data orthogonal to $\cK$. Thus, posing TTS on $\bH^k_{t, \perp}$ only removes an explicitly solvable finite-dimensional ODE.
                This ODE is also responsible for the dependence of the stability constants on $T$ on the full spaces $\bH^k_t$, as the following example shows: $\bu = t \bw_\cK$ solves the problem with $\bf = \bw_\cK$, $\bf_0 = \bzero$, $f = 0$.
                We thus formulate TTS in the spaces $\bcV^k_{t, \perp}, \bcW^k_{t, \perp}$ in Section~\ref{subsec:reform-of-ts}.
                \item \textit{TTNS}. For $\bv_\cK \in \cK$, integration by parts and the skew-symmetry of $\nablaG \bv_\cK$ give
                \begin{equation*}
                    (\bu \cdot \nablaG \bu, \bv_\cK) = - ((\nablaG \bv_\cK) \bu, \bu) - (\divG \bu, \bu \cdot \bv_\cK) = - (\divG \bu, \bu \cdot \bv_\cK).
                \end{equation*}
                The right-hand side does not vanish in general if $\divG \bu \neq 0$. In other words, \textit{even if the initial and forcing data are Killing-free, the convective term can still generate nontrivial Killing modes}. Posing the momentum equation on $\bH^k_{t, \perp}$ would replace $\bu \cdot \nablaG \bu$ by $\cP_\cK^\perp (\bu \cdot \nablaG \bu)$, thereby changing the model. We therefore pose TTNS on the full spaces $\bH^k_t$. This choice is possible because $\pa_t \bu$ controls the Killing component of the velocity. We refer to Section~\ref{subsec:reform-of-ns}.
            \end{itemize}
        \end{remark}

    \subsection{Abstract framework} \label{subsec:abstract-reformulation}
        The following statement makes the strategy of Section~\ref{subsec:reform-of-stokes-type-pbs} precise in an abstract setting. Let $\sX$, $\sY$, $\sZ$ be Banach spaces.
        \begin{proposition}[transfer of well-posedness] \label{prop:transf-of-well-pos}
            Let $\cD : \sY \to \sZ$ be a linear homeomorphism, and let $\cA : \sX \to \sY$ and $\cB : \sX \to \sZ$ be (possibly nonlinear) maps such that $\cB = \cD \circ \cA$:
            \[
                \sX \xrightarrow{\ \ \cA \ \ } \sY \xrightarrow{\ \ \cD \ \ } \sZ, \qquad \cB = \cD \circ \cA.
            \]
            Then the following statements are valid.
            \begin{enumerate}
                \item[(i)] \textit{Equivalence}. For all $(x, y) \in \sX \times \sY$, we have $\cA x = y$ if and only if $\cB x = \cD y$.
                \item[(ii)] \textit{Well-posedness}. The map $\cB$ is a homeomorphism if and only if $\cA$ is. In this case, $\cB^{-1} = \cA^{-1} \circ \cD^{-1}$. In other words, for every $z \in \sZ$, the unique solution $x \in \sX$ of $\cB x = z$ is the unique solution of the primal problem $\cA x = \cD^{-1} z$.
                \item[(iii)] \textit{Bounds}. If $\cA$ is a linear homeomorphism, then so is $\cB$, and
                \begin{equation} \label{eq:cB-bounds}
                    \|\cB\| \leq \|\cD\| \|\cA\|, \qquad \|\cB^{-1}\| \leq \|\cA^{-1}\| \|\cD^{-1}\|.
                \end{equation}
            \end{enumerate}
        \end{proposition}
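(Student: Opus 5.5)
The proof is purely set-theoretic and algebraic; it uses only the factorization $\cB = \cD \circ \cA$ and the fact that $\cD$ and $\cD^{-1}$ are continuous, linear bijections.

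For (i), suppose first that $\cA x = y$. Applying $\cD$ gives $\cB x = \cD(\cA x) = \cD y$. Conversely, if $\cB x = \cD y$, then $\cD(\cA x) = \cD y$, and injectivity of $\cD$ yields $\cA x = y$. Note that no linearity of $\cA$ is needed; linearity of $\cD$ is not even required here, only its injectivity.

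For (ii), suppose $\cA$ is a homeomorphism. Then $\cB = \cD \circ \cA$ is a composition of two homeomorphisms, hence a homeomorphism, with inverse $\cB^{-1} = \cA^{-1} \circ \cD^{-1}$. Conversely, if $\cB$ is a homeomorphism, then $\cA = \cD^{-1} \circ \cB$ is again a composition of homeomorphisms; here we use that $\cD^{-1}$ is continuous, which holds because $\cD$ is assumed to be a homeomorphism. The reformulation of the inverse then follows from (i) with $y = \cD^{-1} z$: $x$ solves $\cB x = z$ if and only if $\cA x = \cD^{-1} z$, and uniqueness transfers directly.

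For (iii), if $\cA$ is linear, then $\cB$ is a composition of linear bounded maps, hence linear and bounded, and $\cB^{-1} = \cA^{-1}\circ\cD^{-1}$ is likewise linear and bounded. Submultiplicativity of operator norms gives the two bounds in \eqref{eq:cB-bounds}.

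There is no genuine obstacle. The only point to watch is that, in the nonlinear setting, "homeomorphism" means a continuous bijection with continuous inverse, so we must not invoke the open mapping theorem. This is why continuity of $\cD^{-1}$ is part of the hypothesis that $\cD$ is a linear homeomorphism.
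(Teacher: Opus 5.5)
Your proof is correct and follows the paper's own argument: you apply $\cD$ (resp.\ use the injectivity of $\cD$, i.e., apply $\cD^{-1}$) for (i), compose homeomorphisms in both directions for (ii), and use submultiplicativity of operator norms for (iii). Your side remarks are also accurate: (i) needs only injectivity of $\cD$, and the continuity of $\cD^{-1}$ is part of the hypothesis rather than something obtained from an open mapping argument.
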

        \begin{proof}
            (i) Apply $\cD$, resp.\ $\cD^{-1}$, to both sides. (ii) Compositions of homeomorphisms are homeomorphisms. (iii) Use $\|S T\| \leq \|S\| \|T\|$ for bounded linear operators $S$ and $T$.
        \end{proof}
        We stress that statement (i) requires nothing of $\cA$; we use this in Section~\ref{sec:conclusion}. The data maps of Section~\ref{subsec:reform-of-stokes-type-pbs} are block triangular, so that their invertibility reduces to that of their diagonal block. To formalize this, let $\sY_1$, $\sY_2$, $\sZ_2$ be Banach spaces. Since $\cD$ preserves the first component, we set $\sY := \sY_1 \times \sY_2$ and $\sZ := \sY_1 \times \sZ_2$, both endowed with the product norm $\|(a, b)\| := (\|a\|^2 + \|b\|^2)^{1/2}$.
        \begin{lemma}[triangular data maps] \label{lem:triangular}
            Let $M : \sY_1 \to \sZ_2$ be a bounded linear operator and $L : \sY_2 \to \sZ_2$ be a linear homeomorphism. Then
            \begin{equation*}
                \cD :=
                \left[ \begin{array}{cc} \operatorname{id} & 0 \\ M & L \end{array} \right] : \sY \to \sZ
            \end{equation*}
            is a linear homeomorphism with inverse
            \begin{equation*}
                \cD^{-1} =
                \left[ \begin{array}{cc} \operatorname{id} & 0 \\ - L^{-1} M & L^{-1} \end{array} \right] : \sZ \to \sY,
            \end{equation*}
            and
            \begin{equation} \label{eq:triangular-bounds}
                \|\cD\| \leq 1 + \|M\| + \|L\|, \qquad \|\cD^{-1}\| \leq 1 + \|L^{-1}\| (1 + \|M\|).
            \end{equation}
            Consequently, let $\cA$ and $\cB$ be as in Proposition~\ref{prop:transf-of-well-pos} with this $\cD$, and let $\cA$ be a homeomorphism. Then, for every $z = (z_1, z_2) \in \sZ$, the unique solution $x \in \sX$ of $\cB x = z$ is the unique solution of
            \begin{equation} \label{eq:reform-as-primal}
                \cA x = \big( z_1, L^{-1} (z_2 - M z_1) \big).
            \end{equation}
        \end{lemma}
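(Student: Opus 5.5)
The plan is to verify everything by direct block computation, since all ingredients are bounded linear operators between Banach spaces with product norms.

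We first check boundedness of $\cD$. For $(y_1,y_2)\in\sY$, the triangle inequality gives
\[
\|\cD(y_1,y_2)\|\le \|y_1\|+\|My_1+Ly_2\| \le (1+\|M\|)\|y_1\|+\|L\|\|y_2\|,
\]
and each of $\|y_1\|,\|y_2\|$ is bounded by $\|(y_1,y_2)\|$. This yields $\|\cD\|\le 1+\|M\|+\|L\|$.

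Next we define the candidate inverse $\cE(z_1,z_2):=(z_1,\,L^{-1}(z_2-Mz_1))$. It is linear, and it is bounded because $L^{-1}$ is bounded, $L$ being a linear homeomorphism. We then verify both compositions by direct substitution. For $\cD\cE$ we get
\[
\cD\cE(z_1,z_2)=\big(z_1,\,Mz_1+LL^{-1}(z_2-Mz_1)\big)=(z_1,z_2).
\]
For $\cE\cD$ we get
\[
\cE\cD(y_1,y_2)=\big(y_1,\,L^{-1}(My_1+Ly_2-My_1)\big)=(y_1,y_2).
\]
Hence $\cD$ is bijective with $\cD^{-1}=\cE$, which has the stated matrix form. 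The bound on $\cD^{-1}$ follows exactly as before:
\[
\|\cE z\|\le \|z_1\|+\|L^{-1}\|\big(\|z_2\|+\|M\|\|z_1\|\big)\le \big(1+\|L^{-1}\|(1+\|M\|)\big)\|z\|.
\]

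For the consequence, $\cD$ is now a linear homeomorphism and $\cB=\cD\circ\cA$ by assumption. Proposition~\ref{prop:transf-of-well-pos}(ii) therefore shows that $\cB$ is a homeomorphism with $\cB^{-1}=\cA^{-1}\circ\cD^{-1}$. So the unique solution of $\cB x=z$ is the unique solution of $\cA x=\cD^{-1}z$. By the formula for $\cE$, this is \eqref{eq:reform-as-primal}.

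There is no real obstacle. The only point needing care is that the product norms are Euclidean while the estimates above use the triangle inequality. The bounds stated are the non-sharp ones that this produces, so they are consistent.
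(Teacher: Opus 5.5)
Your proof is correct and follows the paper's route. The paper derives the inverse by block forward substitution, which is equivalent to your verification of both compositions, and it obtains the bounds from the triangle inequality together with $\max\{\|a\|,\|b\|\}\le\|(a,b)\|\le\|a\|+\|b\|$; it then reads off \eqref{eq:reform-as-primal} from statement (ii) of Proposition~\ref{prop:transf-of-well-pos}, exactly as you do.
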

        \begin{proof}
            Solving $\cD y = z$ is block forward substitution. The first block row gives $y_1 = z_1$, and the second one then reads $L y_2 = z_2 - M z_1$. The bounds \eqref{eq:triangular-bounds} follow from the triangle inequality and $\max\{\|a\|, \|b\|\} \leq \|(a, b)\| \leq \|a\| + \|b\|$. Finally, \eqref{eq:reform-as-primal} is statement (ii) of Proposition~\ref{prop:transf-of-well-pos}.
        \end{proof}
        \begin{remark}[interpretation] \label{rmk:factorization}
            Statement (i) of Proposition~\ref{prop:transf-of-well-pos} says that the reformulation is both \textit{valid} and \textit{reversible}: every solution of the primal problem solves the reformulation, and vice versa. No information is lost, and reversibility requires no separate argument. Lemma~\ref{lem:triangular} (triangular data maps) says that solving the reformulation with arbitrary data $z$ amounts to solving the primal problem with a modified divergence datum. The momentum (and initial) data are unchanged, and the divergence datum $f$ solves the single linear auxiliary problem $L f = z_2 - M z_1$. This auxiliary problem is a Poisson problem for TS and a heat problem for TTS and TTNS. In particular, perturbing the data of the pressure Poisson equation is equivalent to perturbing the divergence constraint. Finally, the operator $\cD$ in our case is always linear, while the operator $\cA$, and hence $\cB$, may be nonlinear, as is the case for TTNS.
        \end{remark}

    \subsection{Functional settings and data maps} \label{subsec:settings}
        We now make the functional settings precise. They are summarized in Table~\ref{tab:settings}. In all three cases, $\cD$ has the triangular form of Lemma~\ref{lem:triangular} (triangular data maps) with $M = - \divG$, applied componentwise (i.e., $(-\divG \bf_0, -\divG \bf)$). The diagonal block $L$ is either the Poisson operator $- 2 \mu \DeltaG$ or, for $r \in \{0,1\}$, the heat operator
        \begin{equation} \label{eq:Lr}
            L_r : \cW^r_\# \to H^{r-1}_\# \times \cV^{r-2}_\#, \qquad L_r f := \big( f(0), \pa_t f - 2 \mu \DeltaG f \big).
        \end{equation}
        Here, $\DeltaG$ is understood ultraweakly for $r = 0$ and weakly for $r = 1$.
        \begin{table}[htbp]
            \caption{Functional settings of Section~\ref{subsec:settings}. For TTS and TTNS, the momentum data include the initial datum, and the pressure data include the datum of the initial Poisson equation. The last column is the diagonal block of the data map; see \eqref{eq:Lr}.} \label{tab:settings}
            \centering
            \resizebox{\textwidth}{!}{
            \begin{tabular}{@{}llllll@{}}
                \hline
                & solution & momentum data & divergence data & pressure data & data map \\
                & $\sX$ & $\sY_1$ & $\sY_2$ & $\sZ_2$ & $L$ \\
                \hline
                TS & $(\bu, u) \in \bH^1_{t, \perp} \times L^2_\#$ & $\bf \in \bH^{-1}_{t, \perp}$ & $f \in L^2_\#$ & $g \in H^{-2}_\#$ & $- 2 \mu \DeltaG$ \\
                TTS & $(u_0, \bu, u) \in H^1_\# \times \bcW^1_{t, \perp} \times \cV^0_\#$ & $(\bf_0, \bf) \in \bL^2_{t, \perp} \times \bcV^{-1}_{t, \perp}$ & $f \in \cW^0_\#$ & $(g_0, g) \in H^{-1}_\# \times \cV^{-2}_\#$ & $L_0$ \\
                TTNS & $(u_0, \bu, u) \in H^2_\# \times \bcW^2_t \times \cV^1_\#$ & $(\bf_0, \bf) \in \bH^1_t \times \bcV^0_t$ & $f \in \cW^1_\#$ & $(g_0, g) \in L^2_\# \times \cV^{-1}_\#$ & $L_1$ \\
                \hline
            \end{tabular}}
        \end{table}

        We first fix the meaning of the differential operators in negative spaces. The Laplace--Beltrami operator $\DeltaG$ is understood as in Section~\ref{subsec:equiv-norms}, i.e., strongly on $H^2_\#$, weakly on $H^1_\#$, and ultraweakly on $L^2_\#$. These definitions are consistent with each other. The surface diffusion operator is understood strongly on $\bH^2_t$ and weakly on $\bH^1_{t, \perp}$; see Section~\ref{subsec:equiv-norms}. In the latter case, $\langle - \DeltaS \bv, \bw \rangle_{-1,1} := (\DG \bv, \DG \bw)$ for $\bw \in \bH^1_{t, \perp}$. Gradient and divergence are extended to negative spaces by integration by parts. Recall from Section~\ref{subsec:killing} that gradients are Killing-free: $\nablaG$ maps $H^{s+1}_\#$ into $\bH^s_{t, \perp}$ for $s \in \{0, 1\}$. Hence, for $\bw \in \bL^2_t$ or $\bw \in \bH^{-1}_{t, \perp}$, and for $u \in L^2_\#$, we may define
        \begin{equation} \label{eq:grad-div}
            \langle \divG \bw, \phi \rangle := - \langle \bw, \nablaG \phi \rangle, \qquad \langle \nablaG u, \bv \rangle_{-1,1} := - (u, \divG \bv),
        \end{equation}
        where $\phi \in H^1_\#$ if $\bw \in \bL^2_t$, $\phi \in H^2_\#$ if $\bw \in \bH^{-1}_{t, \perp}$, and $\bv \in \bH^1_{t, \perp}$. By the divergence theorem on the closed hypersurface $\G$, these operators extend the classical ones. In particular, $\divG \bw = \tr(\nablaG \bw)$ has vanishing mean for $\bw \in \bH^1_t$. All of these operators are bounded, with constants depending only on $\G$. Thus, $\divG$ maps $\bL^2_t$ into $H^{-1}_\#$ and $\bH^{-1}_{t, \perp}$ into $H^{-2}_\#$. It also maps $\bH^1_t$ into $L^2_\#$ and, for $\G$ of class $C^3$, $\bH^2_t$ into $H^1_\#$. Finally, $\nablaG$ maps $L^2_\#$ into $\bH^{-1}_{t, \perp}$. Since $\G$ is stationary, all of these operators act pointwise in time on Bochner spaces.

        \subsubsection{Tangent Stokes} \label{subsec:reform-of-stokes}
            Let $\sX := \bH^1_{t, \perp} \times L^2_\#$ be the solution space, $\sY := \bH^{-1}_{t, \perp} \times L^2_\#$ the space of primal data, and $\sZ := \bH^{-1}_{t, \perp} \times H^{-2}_\#$ the space of data of the reformulation. The primal problem reads: given $(\bf, f) \in \sY$, find $(\bu, u) \in \sX$ such that
            \begin{equation*}
                \cA (\bu, u) := \big( - 2 \mu \DeltaS \bu + \nablaG u, \ \divG \bu \big) = (\bf, f),
            \end{equation*}
            i.e., for all $(\bv, v) \in \bH^1_{t, \perp} \times L^2_\#$,
            \begin{equation} \label{eq:stokes-lp}
                \begin{cases}
                    2 \mu (\DG \bu, \DG \bv) - (u, \divG \bv) \kern-0.72em & = \langle \bf, \bv \rangle_{-1,1}, \\
                    (\divG \bu, v) \kern-0.72em & = (f, v).
                \end{cases}
            \end{equation}
            The reformulation with general data reads: given $(\bg, g) \in \sZ$, find $(\bu, u) \in \sX$ such that
            \begin{equation*}
                \cB (\bu, u) := \big( - 2 \mu \DeltaS \bu + \nablaG u, \ - \DeltaG u + 2 \mu \, \divG (\bR \bu) \big) = (\bg, g),
            \end{equation*}
            i.e., for all $(\bv, \phi) \in \bH^1_{t, \perp} \times H^2_\#$,
            \begin{equation} \label{eq:stokes-lp-reform}
                \begin{cases}
                    2 \mu (\DG \bu, \DG \bv) - (u, \divG \bv) \kern-0.72em & = \langle \bg, \bv \rangle_{-1,1}, \\
                    - (u, \DeltaG \phi) - 2 \mu (\bR \bu, \nablaG \phi) \kern-0.72em & = \langle g, \phi \rangle_{-2,2}.
                \end{cases}
            \end{equation}
            The second equation is the ultraweak form of $- \DeltaG u + 2 \mu \, \divG (\bR \bu) = g$. The data map is the formal one \eqref{eq:data-map-formal-full} with $\alpha_1 = 0$, i.e., $\cD (\bf, f) := (\bf, - \divG \bf - 2 \mu \DeltaG f)$. By the isometries of Section~\ref{subsec:equiv-norms} for $\Lambda$ and $\Lambda_S$ and by the boundedness of $\nablaG$ and $\divG$, the operators $\cA : \sX \to \sY$, $\cB : \sX \to \sZ$, and $\cD : \sY \to \sZ$ are bounded linear operators. For $\cB$, we note that $\bR \bu \in \bL^2_t$, whence $\divG (\bR \bu) \in H^{-1}_\# \subseteq H^{-2}_\#$.

        \subsubsection{Transient tangent Stokes} \label{subsec:reform-of-ts}
            Let $\sX := H^1_\# \times \bcW^1_{t, \perp} \times \cV^0_\#$ be the solution space, $\sY := [\bL^2_{t, \perp} \times \bcV^{-1}_{t, \perp}] \times \cW^0_\#$ the space of primal data, and $\sZ := [\bL^2_{t, \perp} \times \bcV^{-1}_{t, \perp}] \times [H^{-1}_\# \times \cV^{-2}_\#]$ the space of data of the reformulation. Recall from \eqref{eq:spaces-cVr-cWr} that $\bcW^1_{t, \perp} = L^2(I; \bH^1_{t, \perp}) \cap H^1(I; \bH^{-1}_{t, \perp})$ and $\cW^0_\# = L^2(I; L^2_\#) \cap H^1(I; H^{-2}_\#)$. We endow these spaces with the norms
            \begin{equation} \label{eq:ts-norms}
                \begin{split}
                    \|x\|_\sX^2 & = \|u_0\|_1^2 + \|\bu(0)\|^2 + \int_I \|\pa_t \bu\|_{-1}^2 + \|\bu\|_1^2 + \|u\|^2 \dt, \\
                    \|y\|_\sY^2 & = \|\bf_0\|^2 + \|f(0)\|_{-1}^2 + \int_I \|\bf\|_{-1}^2 + \|f\|^2 + \|\pa_t f\|_{-2}^2 \dt, \\
                    \|z\|_\sZ^2 & = \|\bg_0\|^2 + \|g_0\|_{-1}^2 + \int_I \|\bg\|_{-1}^2 + \|g\|_{-2}^2 \dt.
                \end{split}
            \end{equation}
            The primal problem reads: given $y = (\bf_0, \bf, f) \in \sY$, find $x = (u_0, \bu, u) \in \sX$ such that
            \begin{equation} \label{eq:ts-weak}
                \begin{cases}
                    \begin{aligned}
                        \bu(0) + \nablaG u_0 & = \bf_0 & \text{in } \bL^2_{t, \perp}, \\
                        \pa_t \bu - 2 \mu \DeltaS \bu + \nablaG u & = \bf & \text{in } \bcV^{-1}_{t, \perp}, \\
                        \divG \bu & = f & \text{in } \cW^0_\#,
                    \end{aligned}
                \end{cases}
            \end{equation}
            which we write as $\cA x = y$. We stress that the divergence equation is posed in $\cW^0_\#$ rather than in $\cV^0_\#$, because the heat operator $L_0$ in the data map differentiates the divergence datum in time. The space $\cW^0_\#$ of divergence data is also the nonstandard feature of Proposition~\ref{prop:primal} (primal problems) below.
            
            Moreover, the divergence equation in \eqref{eq:ts-weak} is hard to discretize, because its test space is the dual $(\cW^0_\#)'$, which has no simple description; see Section~\ref{subsec:boch-sobolev-spaces}. The reformulation removes this difficulty. The data map replaces the divergence equation with the pressure equations, which are posed in $H^{-1}_\# \times \cV^{-2}_\#$ and tested with $H^1_\# \times \cV^2_\#$. If the datum of the pressure Poisson equation lies in $\cV^{-1}_\#$, the test space $\cV^2_\#$ can be relaxed to $\cV^1_\#$, so that both pressure equations are tested with $H^1$-based spaces; see Remark~\ref{rmk:equal-order} (equal-order formulations).
            
            By Remark~\ref{rmk:lm-vector} (vector-valued counterparts of Lions--Magenes results), we have $\bu(0) + \nablaG u_0 \in \bL^2_{t, \perp}$. Together with $\divG \bu \in \cW^0_\#$ from Lemma~\ref{lem:calculus} (calculus rules), this shows that $\cA : \sX \to \sY$ is a bounded linear operator. Similar spaces appear in \cite[Theorem 3.7]{GuberovicSchwabStevenson2014} for the transient Stokes problem in flat domains with the free-slip boundary condition. We stress that posing the problem on $\bH^k_{t, \perp}$ entails no loss of generality; see Remark~\ref{rmk:killing} (Killing fields). The reformulation with general data reads: given $z = (\bg_0, \bg, g_0, g) \in \sZ$, find $x = (u_0, \bu, u) \in \sX$ such that
            \begin{equation} \label{eq:ts-reform}
                \begin{cases}
                \begin{aligned}
                    \bu(0) + \nablaG u_0 & = \bg_0 & \text{in } \bL^2_{t, \perp}, \\
                    \pa_t \bu - 2 \mu \DeltaS \bu + \nablaG u & = \bg & \text{in } \bcV^{-1}_{t, \perp}, \\
                    -\DeltaG u_0  & = g_0 & \text{in } H^{-1}_\#, \\
                    - \DeltaG u + 2 \mu \, \divG (\bR \bu) & = g & \text{in } \cV^{-2}_\#,
                \end{aligned}
                \end{cases}
            \end{equation}
            which we write as $\cB x = z$. The operator $\cB : \sX \to \sZ$ is bounded and linear. The first two equations are those of the primal problem \eqref{eq:ts-weak}. The third one is the weak form of the initial Poisson equation \eqref{eq:new-ic-pressure}. The fourth one is the ultraweak form of the pressure Poisson equation \eqref{eq:intro-pressure-poi-copy}, i.e.,
            \begin{equation*}
                \int_I - (u, \DeltaG \phi) - 2 \mu (\bR \bu, \nablaG \phi) \dt = \int_I \langle g, \phi \rangle_{-2,2} \dt, \qquad \forall \phi \in \cV^2_\#.
            \end{equation*}
            The data map is the formal one \eqref{eq:data-map-formal-full} with $\alpha_1 = 1$. It has the triangular form of Lemma~\ref{lem:triangular} (triangular data maps) with $M (\bf_0, \bf) := (- \divG \bf_0, - \divG \bf)$ and with the ultraweak heat operator $L := L_0$ from \eqref{eq:Lr}.

        \subsubsection{Transient tangent Navier--Stokes} \label{subsec:reform-of-ns}
            Let $d = 2$ and $\G$ be of class $C^3$. Since we do not assume that $\bu$ is solenoidal, the convective term cannot be controlled in the energy setting of Section~\ref{subsec:reform-of-ts}. Indeed, suppose that $\bu \in \bcW^1_t$ and that the divergence equation is posed in $\cW^0_\#$. For $\bv \in \bcV^1_t$, integration by parts gives
            \[
                (\bu \cdot \nablaG \bu, \bv) = - (\bu \cdot \nablaG \bv, \bu) - (\divG \bu, \bu \cdot \bv).
            \]
            By Ladyzhenskaya's inequality, the first term defines an element of $\bcV^{-1}_t$, as in the solenoidal case. The second term vanishes for solenoidal velocities. Otherwise, it is only bounded by $\|\divG \bu\| \|\bu\|_{\bL^4(\G)} \|\bv\|_{\bH^1(\G)}$, where $\|\divG \bu\| \in L^2(I)$ and $\|\bu\|_{\bL^4(\G)} \in L^4(I)$. This bound only places $(\divG \bu)\bu$ in $L^{4/3}(I; \bH^{-1}_t)$, short of $\bcV^{-1}_t$. If instead $\bu \in \bcW^2_t$, then $\bu \cdot \nablaG \bu \in \bcV^0_t$. This motivates the setting of strong solutions. In view of Remark~\ref{rmk:killing} (Killing fields), we pose the problem on the full spaces $\bH^k_t$. Namely, we let $\sX := H^2_\# \times \bcW^2_t \times \cV^1_\#$ be the solution space, $\sY := [\bH^1_t \times \bcV^0_t] \times \cW^1_\#$ the space of primal data, and $\sZ := [\bH^1_t \times \bcV^0_t] \times [L^2_\# \times \cV^{-1}_\#]$ the space of data of the reformulation. Recall that $\bcW^2_t = L^2(I; \bH^2_t) \cap H^1(I; \bL^2_t)$ and $\cW^1_\# = L^2(I; H^1_\#) \cap H^1(I; H^{-1}_\#)$. We endow $\bcW^2_t$ with the norm analogous to the $\cW$-norms \eqref{eq:cW-norms}. The primal problem reads: given $y = (\bf_0, \bf, f) \in \sY$, find $x = (u_0, \bu, u) \in \sX$ such that
            \begin{equation} \label{eq:tns}
                \begin{cases}
                \begin{aligned}
                    \bu(0) + \nablaG u_0 & = \bf_0 & \text{in } \bH^1_t, \\
                    \pa_t \bu + \bu \cdot \nablaG \bu - 2 \mu \DeltaS \bu + \nablaG u & = \bf & \text{in } \bcV^0_t, \\
                    \divG \bu & = f & \text{in } \cW^1_\#,
                \end{aligned}
                \end{cases}
            \end{equation}
            which we write as $\cA x = y$. Here, $\divG \bu \in \cW^1_\#$ by Lemma~\ref{lem:calculus} (calculus rules) below, and $\cA : \sX \to \sY$ is a nonlinear operator. The reformulation with general data reads: given $z = (\bg_0, \bg, g_0, g) \in \sZ$, find $x = (u_0, \bu, u) \in \sX$ such that
            \begin{equation} \label{eq:tns-reform}
                \begin{cases}
                \begin{aligned}
                    \bu(0) + \nablaG u_0 & = \bg_0 & \text{in } \bH^1_t, \\
                    \pa_t \bu + \bu \cdot \nablaG \bu - 2 \mu \DeltaS \bu + \nablaG u & = \bg & \text{in } \bcV^0_t, \\
                    - \DeltaG u_0 & = g_0 & \text{in } L^2_\#, \\
                    - \DeltaG u - \divG (\bu \cdot \nablaG \bu) + 2 \mu \, \divG (\bR \bu) & = g & \text{in } \cV^{-1}_\#,
                \end{aligned}
                \end{cases}
            \end{equation}
            which we write as $\cB x = z$, with $\cB : \sX \to \sZ$ a nonlinear operator. The last equation is $H^1$-conforming as it stands. In weak form, this equation reads
            \begin{equation*}
                \int_I (\nablaG u, \nablaG \phi) + (\bu \cdot \nablaG \bu, \nablaG \phi) - 2 \mu (\bR \bu, \nablaG \phi) \dt = \int_I \langle g, \phi \rangle_{-1,1} \dt, \qquad \forall \phi \in \cV^1_\#.
            \end{equation*}
            The data map is that of TTS, but one order of spatial regularity higher. Namely, the off-diagonal block is $M = (-\divG, - \divG) : \bH^1_t \times \bcV^0_t \to L^2_\# \times \cV^{-1}_\#$ and the diagonal block is the weak heat operator $L := L_1$.

        \subsubsection{Data maps} \label{subsec:data-maps}
            Inverting the data map amounts to recovering the divergence datum from the data of the reformulation. Recovering this datum requires solving a Poisson or a heat equation, and nothing else.
        \begin{lemma}[data maps] \label{lem:data-maps}
            Let $\G$ be of class $C^2$. Then, in each of the three settings of Table~\ref{tab:settings}, $\cD : \sY \to \sZ$ is a linear homeomorphism, with constants depending only on $\G$ and $\mu$ (and not on $T$). Moreover, $\cD^{-1} z = (z_1, f)$. Here, $z_1 = \bg$ (TS) or $z_1 = (\bg_0, \bg)$ (TTS, TTNS) is the momentum (and initial) datum, and $f \in \sY_2$ is the unique solution of
            \begin{equation} \label{eq:f-from-z}
                - 2 \mu \DeltaG f = g + \divG \bg \ \ \text{(TS)}, \qquad
                \begin{cases}
                    \pa_t f - 2 \mu \DeltaG f \kern-0.72em & = g + \divG \bg, \\
                    f(0) \kern-0.72em & = g_0 + \divG \bg_0,
                \end{cases} \ \ \text{(TTS, TTNS)}.
            \end{equation}
        \end{lemma}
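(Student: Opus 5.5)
The plan is to apply Lemma~\ref{lem:triangular} (triangular data maps) in each of the three settings of Table~\ref{tab:settings}. We therefore need two facts: the off-diagonal block $M = -\divG$ (componentwise) is bounded $\sY_1 \to \sZ_2$, and the diagonal block $L$ is a linear homeomorphism $\sY_2 \to \sZ_2$ with $T$-independent constants. The formula $\cD^{-1} z = (z_1, f)$ with $f$ solving \eqref{eq:f-from-z} is then just the block forward substitution $L f = z_2 - M z_1$, since $-M z_1 = \divG \bg$ (and $\divG \bg_0$). The bounds \eqref{eq:triangular-bounds} transfer $T$-independence from $\|M\|$, $\|L\|$, $\|L^{-1}\|$ to $\|\cD\|$ and $\|\cD^{-1}\|$.

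For $M$, I will use the boundedness of $\divG$ recorded after \eqref{eq:grad-div}. In the TS setting, $\divG : \bH^{-1}_{t,\perp} \to H^{-2}_\#$. In the TTS setting, we need $\divG : \bL^2_{t,\perp} \to H^{-1}_\#$ for the initial datum and, pointwise in time, $\bcV^{-1}_{t,\perp} \to \cV^{-2}_\#$. For TTNS, we need $\bH^1_t \to L^2_\#$ and $\bcV^0_t \to \cV^{-1}_\#$; here $\bL^2_t \to H^{-1}_\#$ suffices and needs only $\G \in C^2$. Since $\G$ is stationary, the constants are those of the spatial operators and are independent of $T$.

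For $L$, the TS case follows from Section~\ref{subsec:equiv-norms}: $\Lambda = -\DeltaG$, understood ultraweakly, maps $L^2_\#$ isometrically onto $H^{-2}_\#$. Hence $L = 2\mu\Lambda$ is a homeomorphism with $\|L\| = 2\mu$ and $\|L^{-1}\| = (2\mu)^{-1}$. For TTS and TTNS, $L = L_r$ with $r = 0$ and $r = 1$ respectively, and Proposition~\ref{prop:heat} (heat equation) gives exactly what is needed. For each $(h_0, h) \in H^{r-1}_\# \times \cV^{r-2}_\#$ there is a unique $\varphi \in \cW^r_\#$ with $L_r \varphi = (h_0, h)$, so $L_r$ is bijective. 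The two-sided bound \eqref{eq:heat-bounds}, with constants depending only on $\mu$, gives $\|L_r\| \le C_1^{-1}$ and $\|L_r^{-1}\| \le C_2$. The lower bound in \eqref{eq:heat-bounds} bounds $\|L_r \varphi\|$ by $\|\varphi\|_{\cW^r_\#}$, which is really a direct estimate: $\|\varphi(0)\|_{r-1}$ is part of the $\cW$-norm, and $\|\pa_t\varphi - 2\mu\DeltaG\varphi\|_{\cV^{r-2}_\#} \le \|\pa_t\varphi\|_{\cV^{r-2}_\#} + 2\mu\|\varphi\|_{\cV^r_\#}$ by the isometry of $\Lambda$.

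The only point needing care is well-definedness. I must check that $L_r$ acts on $\cW^r_\#$ in the sense of \eqref{eq:Lr}, namely that the trace $f(0) \in H^{r-1}_\#$ exists. This follows from Proposition~\ref{prop:lm-cW} (Lions--Magenes) at level $r \in \{0,1\}$, valid for $\G \in C^2$. I must also check that the norms on $\sY$ and $\sZ$ used in \eqref{eq:ts-norms} match the product norms assumed in Lemma~\ref{lem:triangular}. This holds because the $\cW$-norm \eqref{eq:cW-norms} includes the trace at $t=0$. Since everything else is a direct citation, I expect no genuine obstacle; the main thing is bookkeeping to ensure that no constant picks up $T$, which holds because all ingredients are either spatial and applied pointwise in time, or come from Proposition~\ref{prop:heat} with $T$-independent constants.
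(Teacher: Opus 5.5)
Your proposal is correct and follows the paper's proof. You reduce to Lemma~\ref{lem:triangular} (triangular data maps), get boundedness of $M = -\divG$ from the duality definitions applied pointwise in time, and get invertibility of $L$ from the isometry $\Lambda : L^2_\# \to H^{-2}_\#$ (TS) and from Proposition~\ref{prop:heat} (TTS, TTNS), with \eqref{eq:f-from-z} being the forward substitution $L f = z_2 - M z_1$. Your extra bookkeeping (the trace via Proposition~\ref{prop:lm-cW}, matching \eqref{eq:ts-norms} with the product norms, and $\|L_r\| \le C_1^{-1}$, $\|L_r^{-1}\| \le C_2$) is accurate and only makes explicit what the paper leaves implicit.
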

        \begin{proof}
            In view of Lemma~\ref{lem:triangular} (triangular data maps), we only need to show that $M = - \divG$ is bounded and that $L$ is a linear homeomorphism. The former was noted after the duality definitions \eqref{eq:grad-div}, pointwise in time in the transient cases. For TS, $L = - 2 \mu \DeltaG = 2 \mu \Lambda : L^2_\# \to H^{-2}_\#$ is a linear homeomorphism, because $\Lambda$ maps $L^2_\#$ isometrically onto $H^{-2}_\#$; see Section~\ref{subsec:equiv-norms}. For TTS and TTNS, $L = L_r$ with $r = 0$ and $r = 1$, respectively, and Proposition~\ref{prop:heat} (heat equation) states precisely that $L_r : \cW^r_\# \to H^{r-1}_\# \times \cV^{r-2}_\#$ is bijective with $C_1 \|L_r f\| \leq \|f\|_{\cW^r_\#} \leq C_2 \|L_r f\|$. Recall here that the $\cW^r_\#$-norm includes the trace at $t = 0$. Finally, the auxiliary problems \eqref{eq:f-from-z} are the equation $L f = z_2 - M z_1$.
        \end{proof}

    \subsection{Factorization} \label{subsec:factorization}
        We now verify $\cB = \cD \circ \cA$ in the three settings. The computation is the formal one following \eqref{eq:DA-formal}. All we need is that the calculus rules \eqref{eq:calculus-rules} hold at the regularity at hand.
        \begin{lemma}[calculus rules] \label{lem:calculus}
            Let $\G$ be of class $C^2$. Let $\nablaG$ and $\divG$ be defined by the duality definitions \eqref{eq:grad-div}. Let $\DeltaG$ be understood ultraweakly on $L^2_\#$, weakly on $H^1_\#$, and strongly on $H^2_\#$. Let $\DeltaS$ be understood weakly on $\bH^1_{t, \perp}$ and strongly on $\bH^2_t$. Then the rules \eqref{eq:calculus-rules} are valid in the following sense.
            \begin{enumerate}
                \item[(R1)] Let $s \in \{0, 1, 2\}$ and $u \in H^s_\#$, with $H^0_\# = L^2_\#$. Then the identity $\divG \nablaG u = \DeltaG u$ holds in $H^{s-2}_\#$.
                \item[(R2)] Let $\bv \in \bH^1_{t, \perp}$. Then the identity $\divG \DeltaS \bv = \DeltaG \divG \bv + \divG (\bR \bv)$ holds in $H^{-2}_\#$. If $\G$ is of class $C^3$ and $\bv \in \bH^2_t$, then this identity holds in $H^{-1}_\#$.
                \item[(R3)] Let $\bv \in \bcW^1_{t, \perp}$. Then the identity $\divG \pa_t \bv = \pa_t \divG \bv$ holds in $\cV^{-2}_\#$. If $\bv \in \bcW^2_t$, then this identity holds in $\cV^{-1}_\#$.
                \item[(R4)] Let $\bv \in \bcW^1_{t, \perp}$. Then the identity $(\divG \bv)(0) = \divG (\bv(0))$ holds in $H^{-1}_\#$. If $\G$ is of class $C^3$ and $\bv \in \bcW^2_t$, then this identity holds in $L^2_\#$.
            \end{enumerate}
            In particular, $\divG$ maps $\bcW^1_{t, \perp}$ boundedly into $\cW^0_\#$ and, if $\G$ is of class $C^3$, it maps $\bcW^2_t$ boundedly into $\cW^1_\#$.
        \end{lemma}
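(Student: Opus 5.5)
The plan is to verify each rule by testing against the appropriate smooth-enough test functions, using only the duality definitions \eqref{eq:grad-div}, the weak/ultraweak definitions of $\DeltaG$ and $\DeltaS$ from Section~\ref{subsec:equiv-norms}, and Lemma~\ref{lem:stokes-iden} (decomposition of $\DeltaS$). In every case the identity is an equality of functionals, so it suffices to check it on a test function $\phi$ in the predual, namely $H^2_\#$ for identities in $H^{-2}_\#$, $H^1_\#$ for $H^{-1}_\#$, and $L^2_\#$ for $L^2_\#$.

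For (R1), take $\phi\in H^{2-s}_\#$. The definition gives $\langle \divG\nablaG u,\phi\rangle = -\langle \nablaG u,\nablaG\phi\rangle$. For $s=1$ this is $-(\nablaG u,\nablaG\phi) = \langle \DeltaG u,\phi\rangle_{-1,1}$, which is the weak definition. For $s=0$ it equals $(u,\divG\nablaG\phi) = (u,\DeltaG\phi)$, the ultraweak definition; here $\nablaG\phi\in\bH^1_{t,\perp}$ because gradients are Killing-free. For $s=2$ it is the classical statement, or one integrates by parts once more.

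For (R2) with $\bv\in\bH^1_{t,\perp}$ and $\phi\in H^2_\#$, the chain is
\[
\langle\divG\DeltaS\bv,\phi\rangle=-\langle\DeltaS\bv,\nablaG\phi\rangle_{-1,1}=(\DG\bv,\DG\nablaG\phi),
\]
which is legitimate since $\nablaG\phi\in\bH^1_{t,\perp}$. By \eqref{eq:stokes-iden-weak} this equals $(\divG\bv,\DeltaG\phi)-(\bR\bv,\nablaG\phi)$. The first term is $\langle\DeltaG\divG\bv,\phi\rangle_{-2,2}$ ultraweakly, since $\divG\bv\in L^2_\#$. The second term is $\langle\divG(\bR\bv),\phi\rangle$ by \eqref{eq:grad-div}, since $\bR\bv\in\bL^2_t$ for $\G\in C^2$. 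For $\G\in C^3$ and $\bv\in\bH^2_t$, take $\phi\in H^1_\#$ and use \eqref{eq:stokes-iden-stronger} directly: $-(\DeltaS\bv,\nablaG\phi) = -(\nablaG\divG\bv,\nablaG\phi)-(\bR\bv,\nablaG\phi)$, which is the weak form because $\divG\bv\in H^1_\#$. One point needs care: $\bv$ need not be Killing-free in this case. This causes no problem, because the strong $\DeltaS$ is tested only against gradients, and those lie in $\bL^2_{t,\perp}$.

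(R3) and (R4) rest on the fact that $\divG$ is a bounded linear operator independent of time. The first map is $\divG:\bH^{-1}_{t,\perp}\to H^{-2}_\#$, which also maps $\bH^1_{t,\perp}\to L^2_\#$. The second is $\divG:\bL^2_t\to H^{-1}_\#$, which for $\G\in C^3$ also maps $\bH^2_t\to H^1_\#$ and $\bH^1_t\to L^2_\#$. For (R3), I would apply the standard fact that a bounded linear operator commutes with weak time derivatives of Bochner functions. Concretely, for $\chi\in C_c^\infty(I)$ and $\phi\in H^2_\#$,
\[
\int_I\langle\divG\bv,\phi\rangle\chi'\,\dt=-\int_I(\bv,\nablaG\phi)\chi'\,\dt=\int_I\langle\pa_t\bv,\nablaG\phi\rangle\chi\,\dt,
\]
which identifies $\pa_t\divG\bv=\divG\pa_t\bv\in\cV^{-2}_\#$; the case $\bcW^2_t$ is analogous with $\phi\in H^1_\#$. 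For (R4), Remark~\ref{rmk:lm-vector} gives $\bv\in C^0([0,T];\bL^2_{t,\perp})$ (or $C^0([0,T];\bH^1_t)$ for $C^3$). Then $\divG\bv\in C^0([0,T];H^{-1}_\#)$ (respectively $L^2_\#$), and this continuous representative must coincide with the one given by Proposition~\ref{prop:lm-cW}. Evaluating both at $t=0$ gives the identity.

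The final assertion combines these bounds: $\divG\bv\in\cV^0_\#$ and, by (R3), $\pa_t\divG\bv\in\cV^{-2}_\#$. The trace term $\|(\divG\bv)(0)\|_{-1}$ is controlled through (R4) by $\|\bv(0)\|$, which is part of the $\bcW^1_{t,\perp}$-norm. I expect the main subtlety to be bookkeeping rather than analysis: checking that each test function lands in the right space, especially Killing-freeness of $\nablaG\phi$ so that the weak $\DeltaS$ pairing is defined. A second point of care is the consistency of the continuous representatives in (R4).
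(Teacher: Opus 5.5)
Your proposal is correct and follows the paper's proof essentially step by step. You test (R1)--(R2) against $\phi\in H^2_\#$ or $H^1_\#$, using that gradients are Killing-free together with Lemma~\ref{lem:stokes-iden}; you commute the time-independent bounded operator $\divG$ with the weak time derivative for (R3); and you identify continuous representatives through Remark~\ref{rmk:lm-vector} and Proposition~\ref{prop:lm-cW} for (R4). Your scalar testing in (R3) is just the weak form of the paper's commutation of $\divG$ with Bochner integrals, and the bound for $\bcW^2_t\to\cW^1_\#$ follows exactly like the $\bcW^1_{t,\perp}$ case you wrote out.
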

        \begin{proof}
                        In (R1) and (R2), we test the identities in $H^{-2}_\#$ with $\phi \in H^2_\#$ and those in $H^{-1}_\#$ with $\eta \in H^1_\#$. Recall that $\nablaG \phi \in \bH^1_{t, \perp}$ and $\nablaG \eta \in \bL^2_{t, \perp}$.

            (R1) By the duality definitions \eqref{eq:grad-div},
            \begin{equation*}
                \langle \divG \nablaG u, \phi \rangle_{-2,2} = - \langle \nablaG u, \nablaG \phi \rangle_{-1,1} = (u, \divG \nablaG \phi) = (u, \DeltaG \phi),
            \end{equation*}
            which is the ultraweak Laplace--Beltrami operator. For $u \in H^1_\#$, the same computation with $\eta$ in place of $\phi$ gives $\langle \divG \nablaG u, \eta \rangle_{-1,1} = - (\nablaG u, \nablaG \eta)$. The right-hand side is the weak Laplace--Beltrami operator. For $u \in H^2_\#$, the latter equals $(\DeltaG u, \eta)$ by the divergence theorem.

            (R2) By definition and identity \eqref{eq:stokes-iden-weak},
            \begin{equation*}
                \begin{split}
                    \langle \divG \DeltaS \bv, \phi \rangle_{-2,2} & = - \langle \DeltaS \bv, \nablaG \phi \rangle_{-1,1} = (\DG \bv, \DG \nablaG \phi) \\
                    & = (\divG \bv, \DeltaG \phi) - (\bR \bv, \nablaG \phi).
                \end{split}
            \end{equation*}
            The right-hand side is $\langle \DeltaG \divG \bv + \divG (\bR \bv), \phi \rangle_{-2,2}$, with $\DeltaG$ understood ultraweakly on $\divG \bv \in L^2_\#$. For $\bv \in \bH^2_t$, identity \eqref{eq:stokes-iden-stronger} likewise gives
            \begin{equation*}
                \langle \divG \DeltaS \bv, \eta \rangle_{-1,1} = - (\DeltaS \bv, \nablaG \eta) = - (\nablaG \divG \bv + \bR \bv, \nablaG \eta).
            \end{equation*}
            The right-hand side is $\langle \DeltaG \divG \bv + \divG (\bR \bv), \eta \rangle_{-1,1}$, with $\DeltaG$ understood weakly on $\divG \bv \in H^1_\#$.

            (R3) Since $\G$ is stationary, $\divG$ acts pointwise in time, and bounded linear operators commute with Bochner integrals. Hence, for all $\chi \in C^\infty_c(I)$,
            \begin{equation*}
                \int_I \divG \bv \, \chi' \dt = \divG \int_I \bv \, \chi' \dt = - \divG \int_I \pa_t \bv \, \chi \dt = - \int_I \divG \pa_t \bv \, \chi \dt,
            \end{equation*}
            where $\divG : \bH^{-1}_{t, \perp} \to H^{-2}_\#$ (resp.\ $\divG : \bL^2_t \to H^{-1}_\#$) is bounded. Thus, $\pa_t \divG \bv = \divG \pa_t \bv \in \cV^{-2}_\#$ (resp.\ $\cV^{-1}_\#$). Since also $\divG \bv \in \cV^0_\#$ (resp.\ $\cV^1_\#$), we infer that $\divG \bv \in \cW^0_\#$ (resp.\ $\cW^1_\#$).

            (R4) By Remark~\ref{rmk:lm-vector} (vector-valued counterparts of Lions--Magenes results), $\bv \in C^0([0,T]; \bL^2_{t, \perp})$ (resp.\ $C^0([0,T]; \bH^1_t)$). Since $\divG : \bL^2_t \to H^{-1}_\#$ (resp.\ $\bH^1_t \to L^2_\#$) is bounded, the map $t \mapsto \divG (\bv(t))$ is continuous with values in $H^{-1}_\#$ (resp.\ $L^2_\#$). Moreover, it coincides a.e.\ in $I$ with $\divG \bv$. This map is therefore the continuous representative of $\divG \bv \in \cW^0_\#$ (resp.\ $\cW^1_\#$) from Proposition~\ref{prop:lm-cW} (Lions--Magenes). Evaluating at $t = 0$ yields (R4).

            Finally, the bounds of $\|\divG \bv\|_{\cW^0_\#}$ and $\|\divG \bv\|_{\cW^1_\#}$ follow from (R3), (R4), and the boundedness of $\divG$.
        \end{proof}
        With the rules (R1)--(R4) at hand, the factorization is the formal computation of Section~\ref{subsec:reform-of-stokes-type-pbs}, now with every step justified.
        \begin{lemma}[factorization] \label{lem:factorization}
            Let $\G$ be of class $C^2$ for TS and TTS, and of class $C^3$ with $d=2$ for TTNS. Then, in each of the three settings of Table~\ref{tab:settings}, $\cB = \cD \circ \cA$.
        \end{lemma}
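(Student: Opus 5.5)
The plan is to compute $\cD(\cA x)$ componentwise for an arbitrary $x \in \sX$ in each setting, following the formal computation \eqref{eq:DA-formal}, and to justify each step by the calculus rules of Lemma~\ref{lem:calculus}. Since $\cD$ is block triangular with identity in the first block row, the first components of $\cD(\cA x)$ are those of $\cA x$. These coincide by definition with the first components of $\cB x$: the relaxed initial condition and the momentum equation in $\bL^2_{t,\perp}\times\bcV^{-1}_{t,\perp}$, resp.\ $\bH^1_t\times\bcV^0_t$, or the momentum equation alone for TS. So only the pressure components need checking.

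\textbf{Initial Poisson equation} (TTS, TTNS). The third component is $(\divG \bu)(0) - \divG(\bu(0) + \nablaG u_0)$. By (R4), in $H^{-1}_\#$ for TTS and in $L^2_\#$ for TTNS, $(\divG\bu)(0) = \divG(\bu(0))$. By linearity of $\divG$ on $\bL^2_t$ (resp.\ $\bH^1_t$), this component reduces to $-\divG\nablaG u_0$. By (R1) with $s=1$ (resp.\ $s=2$), it equals $-\DeltaG u_0$. This is the third component of $\cB x$.

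\textbf{Pressure Poisson equation.} The fourth component is $(\alpha_1\pa_t - 2\mu\DeltaG)\divG\bu - \divG(\alpha_1\pa_t\bu + \alpha_2\bu\cdot\nablaG\bu - 2\mu\DeltaS\bu + \nablaG u)$. I will expand the second term by linearity of $\divG$, which is bounded $\bH^{-1}_{t,\perp}\to H^{-2}_\#$ (resp.\ $\bL^2_t\to H^{-1}_\#$) pointwise in time. Then:
\begin{itemize}
\item (R3) cancels $\alpha_1\pa_t\divG\bu$ against $\alpha_1\divG\pa_t\bu$ in $\cV^{-2}_\#$ (resp.\ $\cV^{-1}_\#$).
\item (R2), applied pointwise a.e.\ in $I$, gives $2\mu\divG\DeltaS\bu - 2\mu\DeltaG\divG\bu = 2\mu\divG(\bR\bu)$ in $H^{-2}_\#$ (resp.\ $H^{-1}_\#$). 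For TS and TTS this uses $\bu(t)\in\bH^1_{t,\perp}$ and a $C^2$ surface. For TTNS it uses $\bu(t)\in\bH^2_t$ and a $C^3$ surface.
\item (R1) with $s=0$ (resp.\ $s=1$) turns $-\divG\nablaG u$ into $-\DeltaG u$.
\item The convective term $-\alpha_2\divG(\bu\cdot\nablaG\bu)$ is carried along unchanged. It is well defined in $\cV^{-1}_\#$ because $\bu\cdot\nablaG\bu\in\bcV^0_t$ for $\bu\in\bcW^2_t$ when $d=2$.
\end{itemize}
Collecting terms gives exactly the fourth component of $\cB x$, namely $-\DeltaG u - \alpha_2\divG(\bu\cdot\nablaG\bu) + 2\mu\divG(\bR\bu)$, in the space $\sZ_2$.

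The main obstacle is bookkeeping rather than analysis. One must check that each identity holds in a common space, so that adding and cancelling terms is legitimate. For TS and TTS, everything lives in $H^{-2}_\#$ (tested with $H^2_\#$); for TTNS, in $H^{-1}_\#$. One must also check that the pointwise-in-time use of (R1)--(R2) is compatible with the Bochner-space identities. The point is that these identities hold for a.e.\ $t$ between elements of $L^2(I;H^{-2}_\#)$, resp.\ $L^2(I;H^{-1}_\#)$, and hence as elements of $\cV^{-2}_\#$, resp.\ $\cV^{-1}_\#$. Finally, the result depends on $\cD$ being defined on $\cA x$. This holds because $\divG\bu\in\cW^0_\#$, resp.\ $\cW^1_\#$, by the last statement of Lemma~\ref{lem:calculus}.
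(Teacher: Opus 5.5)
Your proposal is correct and follows the paper's proof. The paper likewise runs the formal computation after \eqref{eq:DA-formal}, invokes Lemma~\ref{lem:calculus} to justify (R1)--(R4) in the spaces where the components of $\cB x$ are posed, and notes that the convective term passes through unchanged with $\divG(\bu\cdot\nablaG\bu)\in\cV^{-1}_\#$. You additionally spell out the space-by-space and pointwise-in-time bookkeeping, as well as the well-definedness of $\cD$ on $\cA x$, which the paper leaves implicit.
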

        \begin{proof}
            The formal computation following the composition \eqref{eq:DA-formal} uses only the linearity of $\divG$ and the rules (R1)--(R4). By Lemma~\ref{lem:calculus} (calculus rules), these rules hold for the unknowns in $\sX$, in the spaces in which the corresponding components of $\cB x$ are posed. For TTNS, the convective term appears identically in $\cD(\cA x)$ and $\cB x$, and $\divG (\bu \cdot \nablaG \bu) \in \cV^{-1}_\#$ because $\bu \cdot \nablaG \bu \in \bcV^0_t$.
        \end{proof}

    \subsection{Main results} \label{subsec:main-results}
        The main result is now a direct consequence of Proposition~\ref{prop:transf-of-well-pos} (transfer of well-posedness), Lemma~\ref{lem:data-maps} (data maps), and Lemma~\ref{lem:factorization} (factorization). Theorem~\ref{thm:reformulations} does not assume that the primal problems are well-posed: it shows that each reformulation is well-posed if and only if the corresponding primal problem is.
        \begin{theorem}[equivalence of reformulations] \label{thm:reformulations}
            Let $\G$ be of class $C^2$ for TS and TTS, and let $\G$ be of class $C^3$ and $d = 2$ for TTNS. Let $\cA$ and $\cB$ be the primal and reformulated operators of TS, TTS, or TTNS, with the spaces of Table~\ref{tab:settings}. Then $\cB$ is a homeomorphism if and only if $\cA$ is. In this case, for TS and TTS,
            \begin{equation} \label{eq:stokes-lp-reform-bounds}
                \|\cB\| \leq C \|\cA\|, \qquad \|\cB^{-1}\| \leq C \|\cA^{-1}\|,
            \end{equation}
            where $C$ depends only on $\G$ and $\mu$ (and not on $T$). Moreover, for all data $z$, the solution of $\cB x = z$ is the solution of the primal problem with the same momentum (and initial) data and with the divergence datum $f$ from \eqref{eq:f-from-z}, i.e., the solution of a Poisson problem for TS and of a heat problem for TTS and TTNS.
        \end{theorem}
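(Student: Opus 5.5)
The theorem follows by assembling three earlier results, so the plan is bookkeeping rather than new analysis. First, fix one of the three settings of Table~\ref{tab:settings} together with the regularity of $\G$ stated in the theorem ($C^2$ for TS and TTS, $C^3$ with $d=2$ for TTNS). Lemma~\ref{lem:data-maps} (data maps) gives that $\cD : \sY \to \sZ$ is a linear homeomorphism. Its constants depend only on $\G$ and $\mu$, and Lemma~\ref{lem:data-maps} needs only $C^2$, so it covers all three cases. Lemma~\ref{lem:factorization} (factorization) gives $\cB = \cD \circ \cA$ under exactly the regularity hypotheses of the theorem. Together these verify the hypotheses of Proposition~\ref{prop:transf-of-well-pos} (transfer of well-posedness), whose statement (ii) yields at once that $\cB$ is a homeomorphism if and only if $\cA$ is, and that $\cB^{-1} = \cA^{-1} \circ \cD^{-1}$. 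This holds even for the nonlinear TTNS operator, since (ii) allows nonlinear $\cA$ and $\cB$.

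Second, for the bounds \eqref{eq:stokes-lp-reform-bounds} in the linear cases TS and TTS, I will apply statement (iii) of Proposition~\ref{prop:transf-of-well-pos}, which gives $\|\cB\| \le \|\cD\|\|\cA\|$ and $\|\cB^{-1}\| \le \|\cA^{-1}\|\|\cD^{-1}\|$. It then suffices to take $C := \max\{\|\cD\|, \|\cD^{-1}\|\}$ and bound this by \eqref{eq:triangular-bounds} in Lemma~\ref{lem:triangular}, that is, by $1 + \|M\| + \|L\|$ and $1 + \|L^{-1}\|(1+\|M\|)$. Here:
\begin{itemize}
\item $\|M\|$ is the norm of $-\divG$ acting pointwise in time. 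It depends only on $\G$, because time integration of a pointwise bound does not introduce $T$.
\item $\|L\|$ and $\|L^{-1}\|$ are $2\mu$ and $(2\mu)^{-1}$ for TS, by the isometry of $\Lambda$. For TTS they are $C_1^{-1}$ and $C_2$ from Proposition~\ref{prop:heat}, which depend only on $\mu$.
\end{itemize}
Hence $C$ is independent of $T$. The only point needing care is that the norms on $\sY$ and $\sZ$, given in \eqref{eq:ts-norms}, are product norms that include the traces at $t=0$. These are exactly the norms in which Lemma~\ref{lem:data-maps} and Proposition~\ref{prop:heat} are stated, so the $T$-independence carries through.

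Finally, for the interpretation, I will invoke the last assertion of Lemma~\ref{lem:triangular}: the solution of $\cB x = z$ solves $\cA x = (z_1, L^{-1}(z_2 - M z_1))$. Here $L^{-1}(z_2 - M z_1)$ is precisely the divergence datum $f$ of \eqref{eq:f-from-z}, as identified in Lemma~\ref{lem:data-maps}. I expect no step to be a real obstacle, since the analytic content lies in Lemmas~\ref{lem:data-maps}--\ref{lem:factorization}. The only subtlety is the $T$-uniformity of $C$, which requires checking that every constant entering \eqref{eq:triangular-bounds} comes from a $T$-independent source.
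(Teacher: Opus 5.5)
Your proposal is correct and follows the paper's proof essentially verbatim: Lemma~\ref{lem:data-maps} and Lemma~\ref{lem:factorization} verify the hypotheses of Proposition~\ref{prop:transf-of-well-pos}. Statements (ii) and (iii) then give the equivalence and the bounds with $C := \max\{\|\cD\|, \|\cD^{-1}\|\}$, and formula \eqref{eq:reform-as-primal} of Lemma~\ref{lem:triangular} gives the characterization of the solution. Your explicit tracing of the $T$-independence through \eqref{eq:triangular-bounds} and Proposition~\ref{prop:heat} simply unpacks what the paper delegates to the $T$-independence assertion of Lemma~\ref{lem:data-maps}.
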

        \begin{proof}
            Lemmas~\ref{lem:data-maps} (data maps) and~\ref{lem:factorization} (factorization) verify the hypotheses of Proposition~\ref{prop:transf-of-well-pos} (transfer of well-posedness). Its statements (ii) and (iii) yield the assertions with $C := \max\{\|\cD\|, \|\cD^{-1}\|\}$. The characterization of the solution is the solution formula \eqref{eq:reform-as-primal} of Lemma~\ref{lem:triangular} (triangular data maps).
        \end{proof}

        \textit{Well-posedness of the primal problems}. Theorem~\ref{thm:reformulations} (equivalence of reformulations) transfers well-posedness, but does not establish it. For TS, well-posedness in the energy setting is known \cite[Theorem 4.3]{JankuhnOlshanskiiReusken2018}; the proof there is given for $d = 2$ but is valid for $d \geq 2$. For TTS and TTNS, the data maps require nonvanishing divergence data in $\cW^0_\#$ and $\cW^1_\#$, respectively, which is not a standard setting. 
        \begin{proposition}[primal problems; {\cite[Theorem 4.3]{JankuhnOlshanskiiReusken2018}}, \cite{NochettoShakipov2026-reform-part-ii}] \label{prop:primal}
            Let $\G$ be of class $C^2$. Then the primal operators $\cA$ of TS and TTS are linear homeomorphisms, and $\|\cA\|$ and $\|\cA^{-1}\|$ are bounded by constants depending only on $\G$ and $\mu$ (and not on $T$). If, in addition, $\G$ is of class $C^3$ and $d = 2$, then the primal operator $\cA$ of TTNS is a homeomorphism.
        \end{proposition}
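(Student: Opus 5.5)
The plan is to reduce each primal problem with nonzero divergence datum to a solenoidal one by subtracting a gradient lift, and then to treat the solenoidal problem with standard tools. For TS the plan is: given $f \in L^2_\#$, solve $\DeltaG \psi = f$ with $\psi \in H^2_\#$, using the isometry of Section~\ref{subsec:equiv-norms}. Then $\nablaG \psi \in \bH^1_{t,\perp}$, because gradients are Killing-free, and $\divG \nablaG \psi = f$ by (R1). Writing $\bu = \nablaG\psi + \bw$ leaves a Stokes problem with $\divG \bw = 0$ and modified forcing $\bf + 2\mu \DeltaS \nablaG \psi \in \bH^{-1}_{t,\perp}$. I would then invoke Brezzi's theory. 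Coercivity of $2\mu(\DG\cdot,\DG\cdot)$ on $\bH^1_{t,\perp}$ is exactly Korn's inequality \eqref{eq:korn}. For the inf-sup condition, given $q \in L^2_\#$, solve $\DeltaG \eta = q$ and test with $\bv = \nablaG \eta$. This gives $(q, \divG \bv) = \|q\|^2$ with $\|\bv\|_1 \le C\|q\|$. All constants depend only on $\G$ and $\mu$.

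For TTS the same idea applies, now in time. Given $f \in \cW^0_\#$, set $\psi := -\Lambda^{-1} f$ pointwise in time. By the isometry of $\Lambda$, $\psi \in L^2(I;H^2_\#)\cap H^1(I;L^2_\#) = \cW^2_\#$. Hence $\nablaG \psi \in \bcV^1_{t,\perp}$ and $\pa_t \nablaG\psi = \nablaG \pa_t \psi \in \bcV^0_{t,\perp} \subseteq \bcV^{-1}_{t,\perp}$, so $\nablaG\psi \in \bcW^1_{t,\perp}$ with $T$-independent bounds. Next define $u_0 \in H^1_\#$ by the weak problem $-\DeltaG u_0 = f(0) - \divG \bf_0$ in $H^{-1}_\#$. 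This is forced by \eqref{eq:new-ic-pressure} and gives $\|u_0\|_1 \le C(\|f(0)\|_{-1} + \|\bf_0\|)$. Then $\bw := \bu - \nablaG\psi$ must satisfy the following: its initial value $\bf_0 - \nablaG u_0 - \nablaG \psi(0)$ is divergence-free, by the choice of $u_0$ and (R4); its divergence vanishes; and its forcing lies in $\bcV^{-1}_{t,\perp}$. For this solenoidal problem I would apply the Lions theorem, as in Proposition~\ref{prop:heat}, in the Gelfand triple $\bH^1_{t,\perp,\sigma} \embeds \bL^2_{t,\perp,\sigma} \embeds (\bH^1_{t,\perp,\sigma})'$ of divergence-free fields. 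Coercivity via Korn then yields $T$-independent energy bounds.

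The step I expect to be the main obstacle is recovering $\pa_t \bw$ in the full space $\bcV^{-1}_{t,\perp}$ and the pressure in $\cV^0_\#$, rather than only in the dual of the solenoidal space. The classical approach only produces a pressure that is a distribution in time. To circumvent this, I would introduce a Leray-type projection $\Pi \bg := \bg - \nablaG p$ on $\bH^{-1}_{t,\perp}$, where $p \in L^2_\#$ solves $-\DeltaG p = -\divG \bg$ ultraweakly. By the isometries, $\Pi$ is bounded and maps $\bH^1_{t,\perp}$ into itself, using the $H^2$-regularity of $\DeltaG$. Moreover, $\{\bg \in \bH^{-1}_{t,\perp} : \divG \bg = 0\}$ is isometric to $(\bH^1_{t,\perp,\sigma})'$. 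Thus $\pa_t\bw$ can be identified with a divergence-free element of $\bcV^{-1}_{t,\perp}$, which is consistent with (R3). The pressure is then read off pointwise in time from $\nablaG u = (\operatorname{id}-\Pi)(\bf - \pa_t \bw + 2\mu\DeltaS \bw)$. Its $L^2$-norm is controlled by $\|\nablaG u\|_{-1}$ through the inf-sup bound above, again with $T$-independent constants. Uniqueness follows by taking zero data: the argument forces $u_0 = 0$ and $\psi = 0$, the energy estimate forces $\bw = 0$, and hence $u = 0$.

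For TTNS, with $d=2$ and $\G$ of class $C^3$, I would use the same lift, now at one order higher: $\psi \in L^2(I;H^3_\#)\cap H^1(I;H^1_\#)$, with $u_0 \in H^2_\#$. This yields a Navier--Stokes problem for $\bw$ posed on the full tangential spaces. Its convective term contains the non-solenoidal coupling $\nablaG\psi \cdot \nablaG \bw + \bw\cdot\nablaG\nablaG\psi + \nablaG\psi\cdot\nablaG\nablaG\psi$. I would obtain strong solutions by a Galerkin method with $\bH^1$ and $\bH^2$ energy estimates, using the Ladyzhenskaya and Agmon inequalities in two dimensions and $\bH^2$-regularity of $\DeltaS$. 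Continuity of the inverse would come from a Gronwall-type stability estimate. The delicate point here is global existence, because the lift destroys the cancellation $(\bw\cdot\nablaG\bw,\bw)=0$ only through lower-order terms. I would expect these terms to be absorbed by Gronwall, with constants depending on $T$, which is consistent with the statement asserting only a homeomorphism for TTNS.
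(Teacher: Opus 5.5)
The paper does not prove this proposition. TS is quoted from \cite[Theorem~4.3]{JankuhnOlshanskiiReusken2018}, and TTS and TTNS are deferred to the companion paper \cite{NochettoShakipov2026-reform-part-ii}. Your proposal is therefore a genuinely different, self-contained route, and for TS and TTS it is correct in substance.

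For TS you give the standard Brezzi argument: Korn's inequality on $\bH^1_{t,\perp}$ and an inf-sup bound via $\bv = \nablaG \eta$. This is essentially the argument behind the cited theorem; the lift is harmless there but unnecessary.

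For TTS, your key observation is that the nonstandard divergence data cause no trouble. The lift $\psi = -\Lambda^{-1} f$ maps $\cW^0_\#$ isometrically onto $\cW^2_\#$, so $\nablaG \psi \in \bcW^1_{t,\perp}$ with $T$-independent bounds, and the multiplier $u_0$ is exactly what makes $\bw(0)$ solenoidal. What remains is the classical Lions theorem on the solenoidal Gelfand triple, followed by a de Rham-type recovery of $\pa_t \bw \in \bcV^{-1}_{t,\perp}$ and $u \in \cV^0_\#$ through the Leray projection $\Pi$. The same projection also gives the density of $\bH^1_{t,\perp,\sigma}$ in the solenoidal fields of $\bL^2_{t,\perp}$, which the Gelfand triple requires.

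Relying on citations keeps the paper short and delegates the genuinely nonlinear TTNS analysis. Your route shows transparently that TTS with divergence data in $\cW^0_\#$ reduces to the standard solenoidal theory, with every constant visibly independent of $T$. For TTNS, however, your text is a credible plan rather than a proof.

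Several details need correcting.

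Since $\pa_t \psi$ lies only in $\cV^0_\#$, $\nablaG \pa_t \psi$ lies in $\bcV^{-1}_{t,\perp}$, not in $\bcV^0_{t,\perp}$. The conclusion $\nablaG \psi \in \bcW^1_{t,\perp}$ is unaffected.

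The solenoidal subspace of $\bH^{-1}_{t,\perp}$ is isomorphic, but in general not isometric, to $(\bH^1_{t,\perp,\sigma})'$ in the spectral norms. By (R2), a solenoidal $\bg$ satisfies $\Lambda \divG (\Lambda_S^{-1} \bg) = \divG (\bR \Lambda_S^{-1} \bg)$, which need not vanish, so the maximizer of the dual norm need not be solenoidal. The equivalence constant is the norm of $\Pi$ on $\bH^1_{t,\perp}$, which depends only on $\G$, so nothing is lost.

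The pressure formula should read $\nablaG u = (\operatorname{id} - \Pi)(\bf - \pa_t \bu + 2\mu \DeltaS \bu)$, with $\bu$ instead of $\bw$. As written, it omits $(\operatorname{id} - \Pi)(2\mu \DeltaS \nablaG \psi - \nablaG \pa_t \psi)$, which does not vanish in general. The corrected formula is legitimate because the residual vanishes on solenoidal test fields, so $\Pi$ annihilates it.

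For TTNS, two points need care.
First, the non-solenoidal term $(\nablaG \psi \cdot \nablaG \bw, \bw) = -\tfrac12 (f, |\bw|^2)$ has no sign. It is absorbed by Ladyzhenskaya's inequality and Gronwall because $f \in L^2(I; L^2_\#)$. The Killing modes are handled the same way, since $2\mu \|\DG \cdot\|^2$ is only G\aa{}rding-coercive on $\bH^1_t$.
Second, at the $\bH^1$ level you must test with a solenoidal field such as $\pa_t \bw$ or $-\Pi \DeltaS \bw$, not with $-\DeltaS \bw$. For solenoidal $\bw$, identity \eqref{eq:stokes-iden-stronger} gives $(\DeltaS \bw, \nablaG u) = (\bR \bw, \nablaG u)$, which does not vanish in general. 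The $\bH^2$-regularity of the resulting surface Stokes operator follows from that of $\DeltaS$, because the associated pressure solves $-\DeltaG q = -\divG (\bR \bw)$ and hence lies in $H^1_\#$.
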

        \begin{corollary}[well-posed reformulations] \label{cor:well-posed}
            Under the assumptions of Theorem~\ref{thm:reformulations} (equivalence of reformulations), the reformulated operators $\cB$ of TS, TTS, and TTNS are homeomorphisms. For TS and TTS, $\|\cB\|$ and $\|\cB^{-1}\|$ are bounded by constants depending only on $\G$ and $\mu$ (and not on $T$).
        \end{corollary}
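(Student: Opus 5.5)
The corollary follows by combining Theorem~\ref{thm:reformulations} (equivalence of reformulations) with Proposition~\ref{prop:primal} (primal problems). We first check that the hypotheses match. For TS and TTS, Theorem~\ref{thm:reformulations} assumes $\G$ of class $C^2$; for TTNS it assumes $\G$ of class $C^3$ and $d = 2$. These are exactly the regularity assumptions under which Proposition~\ref{prop:primal} asserts that the corresponding primal operator $\cA$ is a homeomorphism, and a linear one for TS and TTS. Hence, in each of the three settings of Table~\ref{tab:settings}, $\cA : \sX \to \sY$ is a homeomorphism. The equivalence in Theorem~\ref{thm:reformulations} then yields at once that $\cB : \sX \to \sZ$ is a homeomorphism. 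For TTNS this is all we claim, since $\cA$ and $\cB$ are nonlinear there and no operator norms are asserted.

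For TS and TTS, the operators $\cA$ and $\cB$ are linear, so statement (iii) of Proposition~\ref{prop:transf-of-well-pos} applies, as recorded in the bounds \eqref{eq:stokes-lp-reform-bounds}. These give $\|\cB\| \leq C \|\cA\|$ and $\|\cB^{-1}\| \leq C \|\cA^{-1}\|$ with $C = \max\{\|\cD\|, \|\cD^{-1}\|\}$. By Lemma~\ref{lem:data-maps} (data maps), $C$ depends only on $\G$ and $\mu$. This rests on the bounds \eqref{eq:triangular-bounds} together with the $T$-independent two-sided heat estimate \eqref{eq:heat-bounds} for $L_0$. By Proposition~\ref{prop:primal}, $\|\cA\|$ and $\|\cA^{-1}\|$ are likewise bounded by constants depending only on $\G$ and $\mu$. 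Multiplying these bounds gives the claimed $T$-independent bounds on $\|\cB\|$ and $\|\cB^{-1}\|$.

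The main point needing care is the claimed independence of $T$, so we track where $T$ could enter. It does not enter through $\cD$, because the $\cW$-norms \eqref{eq:cW-norms} include the initial trace; this makes the Lions--Magenes embedding in Proposition~\ref{prop:lm-cW} and the heat bounds in Proposition~\ref{prop:heat} uniform in $T$. It does not enter through $\cA$, because TTS is posed on the Killing-free spaces $\bcW^1_{t,\perp}$; Remark~\ref{rmk:killing} shows that the full spaces would introduce $T$-dependence. Apart from this bookkeeping, the corollary is a direct composition of earlier results.
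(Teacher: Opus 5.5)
Your proposal is correct and follows the paper's proof: combine Theorem~\ref{thm:reformulations} with Proposition~\ref{prop:primal}. For TS and TTS, the $T$-independent bounds then follow from \eqref{eq:stokes-lp-reform-bounds} with $C = \max\{\|\cD\|, \|\cD^{-1}\|\}$, which depends only on $\G$ and $\mu$ by Lemma~\ref{lem:data-maps}, together with the $T$-independent bounds on $\|\cA\|$ and $\|\cA^{-1}\|$ from Proposition~\ref{prop:primal}.
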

        \begin{proof}
            Combine Theorem~\ref{thm:reformulations} (equivalence of reformulations) with Proposition~\ref{prop:primal} (primal problems).
        \end{proof}
        \begin{remark}[original initial condition] \label{rmk:original-ic}
            Let $\alpha_1 = 1$, and let the datum of the initial Poisson equation vanish, i.e., $z = (\bg_0, \bg, 0, g)$. Then the initial Poisson equation $-\DeltaG u_0 = 0$ yields $u_0 = 0$, so that the relaxed initial condition reduces to $\bu(0) = \bg_0$. Thus, the reformulation \eqref{eq:tts-ttns-reform-gen-data} from the introduction is the special case $g_0 = 0$ of $\cB x = z$. By Corollary~\ref{cor:well-posed} (well-posed reformulations), this reformulation is well-posed for all data $(\bg_0, \bg, g)$ in $\bL^2_{t,\perp} \times \bcV^{-1}_{t,\perp} \times \cV^{-2}_\#$ for TTS, and in $\bH^1_t \times \bcV^0_t \times \cV^{-1}_\#$ for TTNS. In particular, these data need not satisfy any compatibility condition. By Theorem~\ref{thm:reformulations} (equivalence of reformulations), its solution solves the primal problem \eqref{eq:tts-ttns} with the divergence datum $f$ from \eqref{eq:f-from-z}. This datum satisfies $f(0) = \divG \bg_0$, which is precisely the compatibility condition of \eqref{eq:tts-ttns}.
        \end{remark}
        \begin{remark}[equal-order formulations] \label{rmk:equal-order}
            The TS and TTS reformulations test the pressure Poisson equation with $H^2_\#$-functions. If its datum satisfies $g \in H^{-1}_\#$ (resp.\ $g \in \cV^{-1}_\#$), then $H^1_\#$-functions suffice. Indeed, with $\Lambda = - \DeltaG$, the pressure Poisson equation reads $\Lambda u = g - 2 \mu \, \divG (\bR \bu)$. Its right-hand side lies in $H^{-1}_\#$, because $\bR \bu \in \bL^2_t$. The weak operator $\Lambda : H^1_\# \to H^{-1}_\#$ is invertible by Section~\ref{subsec:equiv-norms}, and it is a restriction of the ultraweak operator $\Lambda : L^2_\# \to H^{-2}_\#$. Hence, $u \in H^1_\#$ (resp.\ $u \in \cV^1_\#$). For TS, the pressure Poisson equation thus holds in the form
            \begin{equation} \label{eq:stokes-lp-reform-H1}
                (\nablaG u, \nablaG \phi) - 2 \mu (\bR \bu, \nablaG \phi) = \langle g, \phi \rangle_{-1,1}, \qquad \forall \phi \in H^1_\#,
            \end{equation}
            and the same holds for TTS a.e.\ in time. By Corollary~\ref{cor:well-posed} (well-posed reformulations) and the bounded inverse theorem, the resulting equal-order operators are linear homeomorphisms between $H^1$-based spaces. For TS, for instance, the operator maps $\bH^1_{t, \perp} \times H^1_\#$ onto $\bH^{-1}_{t, \perp} \times H^{-1}_\#$. Hence, $H^1$-conforming finite elements suffice for both the velocity and the pressure, and conforming discretizations do not require a discrete inf-sup condition; see \cite{NochettoShakipov2025}, where the corresponding discrete analysis is carried out for TS with a modified viscous operator.
        \end{remark}

\section{Conclusions} \label{sec:conclusion}
    We have developed a unified approach to reformulations of the tangent Stokes, transient tangent Stokes, and transient tangent Navier--Stokes equations posed on closed hypersurfaces, which circumvent the saddle-point structure of their formulations in the primitive variables. This approach hinges on a simple observation: the reformulation operator factors as $\cB = \cD \circ \cA$, where $\cA$ is the primal operator and $\cD$ is a linear, block-triangular data map whose invertibility reduces to that of a Poisson or a heat operator. Consequently, each reformulation is well-posed if and only if the underlying primal problem is, and solving it with arbitrary data amounts to solving the primal problem with a modified divergence datum. We close with three remarks.

    \textit{Conformity of the test spaces}. As stated, the TS and TTS reformulations \eqref{eq:stokes-lp-reform} and \eqref{eq:ts-reform} test the pressure Poisson equation with $H^2_\#$-functions, which would entail $H^2$-conforming surface elements. Remark~\ref{rmk:equal-order} (equal-order formulations) shows that this is not necessary: if the datum of the pressure Poisson equation lies in $H^{-1}_\#$ (resp.\ $\cV^{-1}_\#$), then the reformulation is well-posed on $H^1$-based spaces, with the pressure Poisson equation tested with $H^1_\#$-functions. The TTNS reformulation \eqref{eq:tns-reform} is $H^1$-conforming in space as stated, which is one of the reasons for favoring the strong formulation of TTNS. We stress that, in all of these $H^1$-based formulations, conforming discretizations do not require a discrete inf-sup condition; see \cite{NochettoShakipov2025} for TS.

    \textit{Other functional settings}. The identity $\cB = \cD \circ \cA$ is a computation that holds whenever its individual steps are justified. Hence, any functional setting in which the primal operator $\cA$ and the operator $L$ are homeomorphisms, e.g., a setting of higher regularity, yields a well-posed reformulation, provided the corresponding theory for the primal problem is available.

    \textit{Stationary tangent Navier--Stokes}. It is natural to ask whether our approach extends to the stationary tangent Navier--Stokes problem. For $d \in \{2,3,4\}$, the stationary tangent Navier--Stokes problem reads: given $(\bf, f) \in \bH^{-1}_{t, \perp} \times L^2_\#$, find $(\bu, u) \in \bH^1_{t, \perp} \times L^2_\#$ such that for all $(\bv, v) \in \bH^1_{t, \perp} \times L^2_\#$,
    \begin{equation} \label{eq:stat-tns}
        \begin{cases}
            2 \mu (\DG \bu, \DG \bv) + \int_\G \bu \cdot \nablaG \bu \cdot \bv \ds - (u, \divG \bv) \kern-0.72em & = \langle \bf, \bv \rangle_{-1,1}, \\
            (\divG \bu, v) \kern-0.72em & = (f, v).
        \end{cases}
    \end{equation}
    Problem \eqref{eq:stat-tns} with $\DG$ replaced by $\nablaG$ was analyzed in \cite[\S 5]{BenavidesNochettoShakipov2025}; we are not aware of an analysis of \eqref{eq:stat-tns} driven by the surface diffusion operator.     Since its test functions are Killing-free, problem~\eqref{eq:stat-tns} ignores the Killing component of the convective term, which may be nonzero if $f \neq 0$; see Remark~\ref{rmk:killing} (Killing fields). For $f = 0$, we expect existence of possibly non-unique solutions, and under a smallness assumption on $(\bf, f)$, we expect well-posedness. For $f \neq 0$, the standard energy bound is lost: testing the momentum equation with $\bu$ produces the cubic term $(\bu \cdot \nablaG \bu, \bu) = - \frac12 (f, |\bu|^2)$, which has no sign. We therefore do not expect the primal operator to be a homeomorphism on the whole data space, in contrast to TTNS, so statement~(ii) of Proposition~\ref{prop:transf-of-well-pos} (transfer of well-posedness) does not apply. However, statement (i) of Proposition~\ref{prop:transf-of-well-pos} requires nothing of $\cA$: the computation behind Lemma~\ref{lem:factorization} (factorization) carries over essentially unchanged, with the additional term $(\bu \cdot \nablaG \bu, \nablaG \phi)$ in the pressure Poisson equation, and yields that the solution sets of $\cB x = z$ and $\cA x = \cD^{-1} z$ coincide. Therefore, the reformulation of \eqref{eq:stat-tns} inherits whatever existence, uniqueness, and stability properties the primal problem enjoys, with the data transformed by $\cD^{-1}$.

\section*{Acknowledgments}
    All main results in this work were proven in full in M. Shakipov's dissertation at the University of Maryland, College Park, without the use of AI. The Claude Opus 5.5 model was used to condense and improve the presentation and the proofs in this work. The authors take full responsibility for the accuracy and integrity of the work.

\bibliographystyle{siamplain}
\bibliography{references}

\end{document}